\numberwithin{equation}{section}
\newtheorem{thm}{Theorem}[section]
\newtheorem{lem}[thm]{Lemma}
\newtheorem{cor}[thm]{Corollary}
\newtheorem{prop}[thm]{Proposition}
\newtheorem{quest}[thm]{Problem}
\newtheorem{defn}[thm]{Definition}
\theoremstyle{remark}
\newtheorem{ex}[thm]{Example}
\newcommand{\tref}[1]{Theorem~\ref{#1}}
\newcommand{\cref}[1]{Korollar~\ref{#1}}
\newcommand{\R}{\mathbb{R}}
\begin{document}
%\tableofcontents
\pagebreak
%\bibliographystyle{alpha}

%\pagenumbering{roman}

\title{Remarks on manifolds with two sided curvature bounds}

\thanks{V.K.  is partially supported by a Discovery grant from NSERC;
	A. L. was partially supported by the DFG grants   SFB TRR 191 and SPP 2026.}
\author{Vitali Kapovitch}
\address{University of Toronto}
\email{vkt@math.toronto.edu}

\author{Alexander Lytchak}

\address
{Mathematisches Institut\\ Universit\"at K\"oln\\ Weyertal 86 -- 90\\ 50931 K\"oln, Germany}
\email{alytchak@math.uni-koeln.de}

\keywords
{Distance functions,  cut locus, subsets of positive reach, harmonic coordinates, Alexandrov spaces}
\subjclass
[2010]{53C20, 53C21, 53C23}

\date{\today}

%\thanks{The second author was partially supported by Swiss National Science Foundation Grant 153599}

\begin{abstract}
	We discuss  folklore statements about distance functions in manifolds
	with two sided bounded curvature.  The topics include regularity, subsets of positive reach and the cut locus.
\end{abstract}

%\author{Alexander Lytchak}
%\address{Mathematisches Institut\\ Universit\"at Bonn\\
%Wegelerstrasse 10, 53115 Bonn, Germany\\}
%\email{lytchak\@@math.uni-bonn.de}

%\subjclass{53C20,  52B99}
%\footnotetext[1]{}

\maketitle

\section{Introduction}
\subsection{Distance functions in smooth Riemannian manifolds}
We discuss slightly generalized versions of some 
folklore results about distance functions $d_A$ to subsets $A$ of smooth Riemannian manifolds $M$.
The results turn out to be local and  are proved without any completeness  assumptions.  
Our  proofs  do not involve Jacobi fields, but only basic facts about semi-concavity and semi-convexity  of distance functions.
Therefore, the statements generalize to the synthetic setting of manifolds with two sided curvature bounds, as will be explained below.

The first statement is   well-known in the complete situation, see, for instance,  \cite{Mante}, \cite{Sine}, \cite[Section 2]{Ghomi}.

\begin{prop} \label{prop: distance}
Let  $M$ be a smooth Riemannian  manifold. Let $A\subset M$ be a closed subset and let $f$
denote the distance function to the set $A$.  Then $f$ is semiconcave in $M\setminus A$.  The following conditions are equivalent for an open subset $O\subset M \setminus A $:
\begin{enumerate}
\item $f$ is semiconvex in $O$.
\item $f$ is $\mathcal C^{1,1}$ in $O$.
\item  $f$ is $\mathcal C^1$ in $O$.

\item  For any $y\in O$, there exists at most one geodesic $\gamma_y :[0,\epsilon)\to M$ starting at $y$ and parametrized by arclength, with the property 
  $f\circ \gamma_y (t) =f(y)-t$, for all $t$ in $[0,\epsilon)$.
\end{enumerate}
\end{prop}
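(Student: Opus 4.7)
The plan is to establish the semiconcavity of $f$ on $M\setminus A$ up front, then cycle the implications $(2)\Rightarrow(3)\Rightarrow(4)\Rightarrow(1)\Rightarrow(2)$. For the semiconcavity, each function $d(\cdot,a)$ is smooth on a punctured neighbourhood of $a$ and locally semiconcave with a constant that is uniform in $a$ over a compact set (by Hessian comparison against a lower sectional curvature bound); the infimum of such an equi-semiconcave family is semiconcave, so $f=\inf_{a\in A}d(\cdot,a)$ is semiconcave on $M\setminus A$. The implication $(2)\Rightarrow(3)$ is trivial. For $(3)\Rightarrow(4)$: given $f\in\mathcal C^1(O)$ and an admissible $\gamma_y$, one has $\langle\nabla f(y),\gamma_y'(0)\rangle=-1$, while $|\gamma_y'(0)|=1$ and $|\nabla f(y)|\le 1$ by the $1$-Lipschitz property of $f$; Cauchy--Schwarz equality then forces $\gamma_y'(0)=-\nabla f(y)$ and $|\nabla f(y)|=1$, so $\gamma_y$ is uniquely determined by the geodesic ODE. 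Finally $(1)\Rightarrow(2)$ is the standard fact that a function which is simultaneously semiconvex and semiconcave on an open subset of a Riemannian manifold is locally $\mathcal C^{1,1}$.

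The essential step is $(4)\Rightarrow(1)$. I would first argue that (4) forces $f\in\mathcal C^1(O)$ with $|\nabla f|\equiv 1$. Semiconcavity gives a nonempty compact convex superdifferential $\partial^+f(y)$ contained in the closed unit ball of $T_yM$, together with the representation $f'_y(v)=\inf_{p\in\partial^+f(y)}\langle p,v\rangle$. Any minimizing (or almost-minimizing) geodesic from a near-nearest point of $A$ to $y$ contributes its reversed terminal velocity to $\partial^+f(y)$ via the bound $f\le d(\cdot,\gamma(0))$ with equality at $y$. An extremal-point analysis then shows that the unit vectors in $\partial^+f(y)$ are exactly the reversed initial velocities of admissible $\gamma_y$'s, so uniqueness in (4) forces $\partial^+f(y)$ to be the singleton of a unit vector; upper semi-continuity of $y\mapsto\partial^+f(y)$ for semiconcave $f$ promotes this pointwise differentiability to $f\in\mathcal C^1(O)$.

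With $f\in\mathcal C^1(O)$, $|\nabla f|\equiv 1$, and $\nabla f$ continuous in hand, the integral curves of $\nabla f$ are unit-speed curves along which $f$ grows at rate one; by the $1$-Lipschitz property the curve from $y$ up to time $s$ realizes the distance from $y$ to its endpoint $z_s$, so it is a minimizing geodesic with $d(y,z_s)=s$ and $f(z_s)=f(y)+s$. For small $s>0$, the triangle inequality for $d(\cdot,A)$ then gives
\begin{equation*}
f(x)\ge f(z_s)-d(x,z_s)=f(y)+s-d(x,z_s)
\end{equation*}
for all $x$ in a neighbourhood of $y$, with equality at $x=y$. The right-hand side is a smooth function of $x$ (since $y$ lies strictly inside the injectivity radius of $z_s$) whose Hessian at $y$ is bounded below by $-C/s$ via Hessian comparison for $d(\cdot,z_s)$ against an upper sectional curvature bound. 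This provides a smooth lower support to $f$ at $y$ with controlled Hessian, locally uniformly in $y$ on compact subsets of $O$, which is exactly the semiconvexity of $f$ on $O$. The principal obstacle is the extremal-point identification in the first step: producing, or suitably approximating, unit-speed minimizing geodesics from $y$ to $A$ under only the hypothesis (4) in a possibly incomplete $M$ is delicate, and it is there that (4) is genuinely used rather than a weaker condition phrased purely in terms of $\partial^+f$.
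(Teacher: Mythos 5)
Your architecture matches the paper's in outline: semiconcavity up front, the cheap implications, and then $(4)\Rightarrow(1)$ by flowing a point $y$ out along the gradient direction to $z_s$ and using $f\ge f(z_s)-d(\cdot,z_s)$ as a lower support function with controlled concavity (the paper runs exactly this estimate, phrased with midpoints and the triangle inequality rather than Hessians). But the obstacle you flag in your last sentence is a genuine gap, and it is the one substantive point of the proof rather than a technicality: your extremal-point identification of the superdifferential $\partial^+f(y)$, and in fact already your opening semiconcavity argument, presuppose that points of $O$ are joined to $A$ (or to the individual points $a\in A$) by minimizing geodesics along which one can run Hessian comparison. In an incomplete manifold such geodesics need not exist. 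Moreover, condition (4) only asserts \emph{at most} one admissible geodesic, so to conclude $|\nabla f(y)|=1$ (rather than $|\nabla f(y)|<1$, in which case (4) holds vacuously at $y$) you must also produce at least one unit vector in $\partial^+f(y)$, which again requires an existence statement for geodesics.

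The paper's fix, which is what your proposal is missing, is a localization replacing $A$ by a compact set inside a compact convex ball. Fix $x\in O$, choose a compact geodesically convex ball $U=\bar B_{r}(x)$ disjoint from $A$ in which geodesics extend to the boundary, set $s=f(x)$ and $B=f^{-1}(s-2r)\cap U$; a short triangle-inequality argument shows that $f=(s-2r)+d_B$ on the ball of radius $r/3$ about $x$. Since $B$ is compact and $U$ is compact and convex, every point $y$ near $x$ has a foot point on $B$ and a minimizing geodesic to it, and the initial segments of these geodesics are precisely the admissible curves in (4). After this reduction every step of your argument closes: $d_B$ is an infimum of uniformly semiconcave functions $d(\cdot,b)$, $b\in B$, with genuine minimizers available for the comparison; $\partial^+f(y)$ is the convex hull of the reversed initial velocities of the minimizing geodesics from $y$ to $B$, so (4) forces it to be a singleton unit vector and upper semicontinuity gives $f\in\mathcal C^1$; and the gradient curve reaches $z_s$ inside $U$ for a uniform $s>0$, giving the semiconvexity estimate with a locally uniform constant. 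So the proposal is not a complete proof as written, but it becomes one, essentially identical to the paper's, once this localization lemma is inserted.
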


We assume some familiarity with the notions of semiconcave (semiconvex) functions and their gradient flows, \cite{Petrunin-semi}, \cite{AKP}, \cite{Gigli}.
Here and below,  the notions of semi-concavity (convexity) and $\mathcal C^{1,1}$ are local. For instance, we say  that a function $f$ is  $\mathcal C^{1,1}$ if it is $\mathcal C^1$ and the gradient $\nabla f$ is \emph{locally}  Lipschitz continuous.  
% Similarly, a function $f$ is semi-concave if for any points $x$  there exists a neighborhood $U$ of $x$ and a constant 
%$C>0$, such that $f\circ \gamma (t) -Ct^2$ is concave for any geodesic $\gamma$ in $U$ parametrized by arclength.

For a closed subset $A$ of $M$, denote by $Reg ^{d_A}$
the set of points $x\in M\setminus A$, such that $d_A$ is $\mathcal C^1$ in a neighborhood of $x$.  Thus, $Reg ^{d_A}$ is the maximal open set $O\subset M\setminus A$ which satisfies the equivalent conditions  of Proposition \ref{prop: distance}. The closed subset $CL(A)= (M\setminus A) \setminus Reg ^{d_A}$ of $M\setminus A$ is called the   \emph{cut locus} of $A$ in $M$.

If $M$ is  complete, then the geodesic $\gamma _y$  in (4) can be extended to a unique minimizing geodesic  $\gamma _y:[0, d_A(y)]  \to M$ from $y$ to $A$.   The endpoint of this geodesic is  the unique projection point $\Pi ^A (y)$ of $y$ on $A$.  Moreover, the whole geodesic $\gamma_y ([0, d_A(y)))$ is contained in $Reg ^{d_A}$.   

If $M$ is complete and $A$ is a $\mathcal C^k$ submanifold, for $k \geq 2$, then  $d_A: Reg ^{d_A} \to \R$ can be expressed in terms of the normal exponential map of $A$   and turns out to be  $\mathcal C^k$, see, for instance, \cite[Prop. 4.3]{Mante}. 

In the non-complete situation, none of the above statements need to hold, see Example \ref{ex: nonsmooth}.

The next result  is   known to specialists   for some subsets  in  complete manifolds,  \cite{Alb1}, \cite{Alb2}. 
%in slighlty less general situation as well. However, while it is of some fundamental importance in Riemannian geometry it is not as well-known as it should be.
  We inlcude a  short proof  based on a general fact about gradient flows. Note, that the function $d_A$ is  semi-concave  on $M\setminus A$ and has a uniquely defined (local) gradient flow.

%In the above notation, the closed subset $CL(A)= (M\setminus A) \setminus Reg (A)$ of $M\setminus A$ is called the   \emph{cut locus} of $A$ in $M$. 

\begin{prop} \label{prop: cut}
	Let $A$ be a closed subset of a Riemannian manifold $M$.  Then 
	the cut locus $CL(A)\subset M\setminus A$  of $A$ is invariant under the gradient flow $\Phi $ of the distance function  $d_A$. 
	% Thus, for any $x\in CL(A)$ the unique maximal gradient curve  $\gamma :[0, t_{max}) \to M\setminus A$ of $d_A$ starting at $x$ is completely contained in $CL(A)$.
\end{prop}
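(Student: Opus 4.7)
My plan is to prove the contrapositive: if $y\in M\setminus A$ and $\Phi_t(y)\in \Reg^{d_A}$ for some $t\geq 0$, then $y\in \Reg^{d_A}$. First I would reduce to an ``immediate invariance'' statement. Consider the closed set $F:=\{s\in[0,t]:\Phi_s(y)\in CL(A)\}$, which does not contain $t$, and let $\tau:=\sup F$. If $F=\emptyset$ then $y\in \Reg^{d_A}$ and we are done; otherwise $\tau<t$ and $z:=\Phi_\tau(y)\in CL(A)$, while by openness of $\Reg^{d_A}$ together with continuity of the flow one has $\Phi_r(z)\in \Reg^{d_A}$ for every $r\in(0,t-\tau]$. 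Thus it suffices to rule out a point $z\in CL(A)$ whose $\Phi$-trajectory enters $\Reg^{d_A}$ immediately.

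The general fact about gradient flows that I would invoke is: on $\Reg^{d_A}$ the function $d_A$ is $\mathcal{C}^{1,1}$ with $|\nabla d_A|\equiv 1$ by \pref{prop: distance}, so the semiconcave gradient flow $\Phi$ agrees there with the classical ODE flow of the locally Lipschitz unit vector field $\nabla d_A$. If the orbit of $z$ lies in $\Reg^{d_A}$ for all $r\in(0,r_0]$, it is therefore a smooth unit-speed geodesic along which $d_A(\Phi_r(z))=d_A(z)+r$; such an ambient geodesic admits a unique smooth extension, and by continuity of $\Phi$ this extension is the trajectory itself through $r=0$. In particular the semiconcave gradient $\nabla d_A(z)$ equals the one-sided tangent at $0$ and has unit norm.

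Reversing this geodesic yields a unit-speed geodesic $\gamma$ issuing from $z$, and a short argument combining the exact identity $d_A\circ\Phi_r(z)=d_A(z)+r$ for $r>0$ with semiconcavity of $d_A$ and its $1$-Lipschitz bound shows $d_A\circ\gamma(s)=d_A(z)-s$ for small $s\geq 0$; hence $\gamma$ is a descending geodesic at $z$ in the sense of \pref{prop: distance}(4). Performing the same construction at every $z'$ in a small neighborhood of $z$ whose $\Phi$-orbit enters $\Reg^{d_A}$ immediately produces a well-defined family of descending geodesics $\gamma_{z'}$, each realized as the reversal of a classical integral curve; backward uniqueness for the ODE in $\Reg^{d_A}$ ensures that no competing descending geodesic exists at any such $z'$. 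Condition (4) of \pref{prop: distance} therefore holds on a full neighborhood of $z$, forcing $z\in \Reg^{d_A}$ and contradicting $z\in CL(A)$.

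The main obstacle is the last step: upgrading the smooth extension of the flow orbit across $s=0$ to the \emph{exact} equality $d_A\circ\gamma(s)=d_A(z)-s$ uniformly near $z$, and ruling out a second descending geodesic at nearby points. The cleanest way to handle this is to pass to the limit from nearby regular points $z'$, where the exact equality holds along the reversed orbit by the functional identity $d_A\circ\Phi_r = d_A+r$, using local Lipschitz continuity of $\Phi$ and openness of $\Reg^{d_A}$; uniqueness of classical backward integral curves on $\Reg^{d_A}$ then supplies the missing rigidity and closes the argument.
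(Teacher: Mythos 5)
Your reduction to an ``immediate invariance'' statement at a first exit point $z\in CL(A)$ is sound, and the single-point analysis there essentially works: from $d_A(\Phi_r(z))=d_A(z)+r$ for $r>0$ and the identity $(d_A\circ\eta_z)'(0)=|\nabla_z d_A|^2$ one gets $|\nabla_z d_A|=1$, hence by first variation a unique shortest geodesic from $z$ to $A$. The genuine gap is the last step. Membership in $\Reg^{d_A}$ is an \emph{open} condition: you must verify condition (4) of \pref{prop: distance} on a full neighborhood of $z$, whereas your construction only reaches those $z'$ whose forward orbit is \emph{already known} to enter $\Reg^{d_A}$ immediately. Nothing in the argument shows that this set of $z'$ contains a neighborhood of $z$, and in general it cannot be expected to: by \pref{prop: distance1}, $CL(A)$ is the closure of $X=\{x:|\nabla_x d_A|<1\}$, so points of $X$ accumulate at $z$; each of them carries two descending geodesics, and showing that their orbits stay out of $\Reg^{d_A}$ is precisely the statement you would need and never prove. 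Note also that isolated points of $CL(A)$ can satisfy $|\nabla d_A|=1$ and condition (4) pointwise (first conjugate points on an ellipsoid, say), so no contradiction can be extracted from data at $z$ alone.

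The missing ingredient is the propagation lemma: if $|\nabla_x d_A|<1$, then $|\nabla_y d_A|<1$ for every $y$ on the gradient curve through $x$. This is \lref{lem: concl} and Corollary \ref{cor: petrun} in the paper, proved by a comparison argument for the $\hat f$-concave function $\cosh\circ\, d_A$ restricted to arclength-reparametrized gradient curves; once one has it, $X$ is flow-invariant, hence so is its closure $CL(A)$ by continuity of $\Phi$. Your appeal to backward uniqueness of the ODE on $\Reg^{d_A}$ cannot substitute for this: it controls neither the competing descending geodesics (which live on the side of $z'$ facing $A$ and need not be integral curves lying in $\Reg^{d_A}$) nor the orbits of the nearby points of $X$. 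A secondary, fixable inaccuracy: for the backward geodesic extension $\gamma$ of the orbit, semiconcavity and the Lipschitz bound only give $d_A(z)-s\le d_A(\gamma(s))\le d_A(z)-s+\frac{C}{2}s^2$, not the exact equality you assert; the descending geodesic at $z$ should instead be produced as the shortest geodesic to $A$ furnished by $|\nabla_z d_A|=1$.
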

 
 A natural generalization of the above result and its proof is valid in Alexandrov spaces, \cite[Prop. 14.1.5]{AKP}. As an application of Proposition \ref{prop: cut} one can derive a very short  proof of the nice geometric observation \cite[Theorem 6.1]{Ghomi}, see Corollary \ref{cor: ghomi} below.

 Finally, we address  (again essentially well-known to specialists) properties of subsets of positive reach, defined and investigated  by Federer  in Euclidean spaces, \cite{Federer},  and
 by Bangert and Kleinjohann in Riemannian manifolds, \cite{Bangert}, \cite{Kleinjohann}; see also \cite{Ly-conv}, \cite{Ly-reach}, \cite{Rataj}.
 
   Recall that  a closed subset $A$ in a Riemannian manifold  is said to have positive reach, if $A$ has a neighborhood $O$ such that the foot point projection $\Pi^A$ is uniquely defined on $O$.
 As has been shown by Federer and Bangert, the notion only depends on the underlying smooth structure and not on the Riemannian metric. The following result is  essentially contained in \cite{Kleinjohann}, \cite{Bangert}, \cite{Ly-conv}.
 
  %for smooth manifolds is independent of the Riemannian metric. We can now easily extend the statements to manifolds with two sided curvature bounds.
 
 \begin{prop} \label{prop: reach}
 	Let $A$ be a closed subset of a smooth Riemannian manifold  $M$.
 	Then the following  are equivalent:
 	\begin{enumerate}
 		\item The subset $A$ is of positive reach.
 		\item There is an open neighborhood $O$ of $A$ such that the distance function $d_A$ 
 		is $\mathcal C^{1,1}$ in $O\setminus A$.
 		\item The  function $d_A$ is semiconvex on a  neighborhood $O$ of $A$.
 		%\item There is an opne neigborhood $O$ of $A$ such that $d_A$ 
 		%and it is $\mathcal C^{1,1}$ in $O\setminus A$.
 	%	\item The set $A$ is   convex with respect to some Riemannian metric $\tilde g$, which is locally Lipschitz continuous. 
 	\end{enumerate} 
 \end{prop}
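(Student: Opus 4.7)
My strategy is the cycle $(1) \Rightarrow (2) \Rightarrow (3) \Rightarrow (1)$, with $(1) \Leftrightarrow (2)$ a direct application of condition~(4) of \pref{prop: distance} and with the real content concentrated in the step $(2) \Rightarrow (3)$.

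For $(1) \Rightarrow (2)$: let $O$ be a neighborhood of $A$ on which the foot-point projection is single-valued. For any $y \in O \setminus A$, any unit-speed geodesic $\sigma\colon [0,\epsilon) \to M$ along which $d_A$ decreases linearly stays in $O$ and, extended maximally and argued via the triangle inequality at times $\tau$ small enough that $\sigma(\tau) \in O$, must have $\Pi^A(y)$ as the unique foot point of $\sigma(\tau)$ as well; hence $\sigma$ is the initial segment of the (unique) minimizing geodesic from $y$ to $\Pi^A(y)$, and in particular $\sigma$ is unique. Condition~(4) of \pref{prop: distance} therefore holds at every $y \in O \setminus A$ and gives $d_A \in \mathcal{C}^{1,1}(O \setminus A)$. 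Conversely for $(3) \Rightarrow (1)$, restricting semiconvexity to $O \setminus A$ again supplies condition~(4) by \pref{prop: distance}, and after shrinking $O$ so that for every $y \in O$ the intersection $A \cap \bar B(y, 2 d_A(y))$ is compact (a routine argument in the non-complete setting), existence of at least one foot point is guaranteed and uniqueness of the descending geodesic from $y$ translates into uniqueness of its foot point.

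The main obstacle is $(2) \Rightarrow (3)$: extending $\mathcal{C}^{1,1}$-regularity from $O \setminus A$ to a semiconvexity statement on a neighborhood of $A$ itself, even though $\nabla d_A$ has unit norm on $O \setminus A$ and does not extend continuously. My plan is to pass to the squared distance $f := d_A^2/2$, whose gradient $\nabla f = d_A \cdot \nabla d_A$ has norm $d_A$ and so extends continuously by zero across $A$. Applying the already-proven implication $(2) \Rightarrow (1)$, I would then invoke the classical Lipschitz continuity of the foot-point projection on a neighborhood of a positive-reach set (cf.~\cite{Federer}, \cite{Bangert}, \cite{Kleinjohann}) to identify $\nabla f(y)$ with the Lipschitz vector field $-\exp_y^{-1}(\Pi^A(y))$, so that $f \in \mathcal{C}^{1,1}(O)$ and in particular $f$ is semiconvex on $O$ with a uniform constant.

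It then remains to deduce semiconvexity of $d_A = \sqrt{2f}$ on $O$. Along a short unit-speed geodesic $\gamma$, writing $g := d_A \circ \gamma$ and $h := f \circ \gamma = g^2/2$, the identity $g \cdot g'' + (g')^2 = h''$ holds on the open set $\{g > 0\}$. At a parameter $t_0$ with $g(t_0) = 0$ the function $g$ attains a zero minimum and the one-sided derivatives satisfy $g'(t_0^-) \leq 0 \leq g'(t_0^+)$, contributing a non-negative Dirac mass to the distributional second derivative of $g$. The subtle step I expect to require the most care is to convert the identity above into a genuine distributional lower bound $g'' \geq -K$ on the whole parameter interval, uniformly in $\gamma$ and in the choice of base point in $A$, despite the singular denominator in the formal expression $g'' = (h'' - (g')^2)/g$. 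This hinges on the eikonal constraint $|\nabla d_A| \equiv 1$, which forces $h'' - (g')^2 = g g''$ to vanish at the correct rate as $g \to 0$ rather than be merely bounded; it is here that the geometric theory of positive-reach sets enters essentially, and where I would draw on \cite{Bangert}, \cite{Kleinjohann}, \cite{Ly-conv}.
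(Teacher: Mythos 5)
Your cycle $(1)\Rightarrow(2)\Rightarrow(3)\Rightarrow(1)$ agrees with the paper on the easy arcs: $(1)\Leftrightarrow(2)$ and $(3)\Rightarrow(2)$ are indeed immediate from \pref{prop: distance}, and your handling of foot-point existence in the non-complete setting is the right kind of care. The problem is the arc you correctly identify as the main content, $(2)\Rightarrow(3)$, where your argument has a genuine gap at the final square-root step. Knowing that $h:=d_A^2/2$ is $\mathcal C^{1,1}$ up to $A$ (with $|\Hess h|\le K$, say) does \emph{not} imply that $d_A=\sqrt{2h}$ is semiconvex: for a general nonnegative $\mathcal C^{1,1}$ function this implication is false (e.g.\ $h(t)=t^2\bigl(1+\tfrac12\sin\log|t|\bigr)$ is $\mathcal C^{1,1}$ and nonnegative near $0$, but $(\sqrt{h})''$ oscillates like $c(t)/t$ and is unbounded below). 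So the specific structure of $d_A$ must enter, and your proposed mechanism for making it enter is circular. Along a geodesic $\gamma$ the chain rule gives $g\,g''+(g')^2=h''$ with $g=d_A\circ\gamma$, and rearranging to $g''=(h''-(g')^2)/g$ bounds $g''$ from below only if you already know $h''-(g')^2\ge -Cg$, i.e.\ $g''\ge -C$ --- exactly the conclusion sought. The eikonal equation does not rescue this: along a general geodesic one only has $(g')^2=\langle\nabla d_A,\gamma'\rangle^2\le 1$, and in directions tangent to the level sets ($g'\approx 0$, $g$ small) the required inequality $h''\ge -Cg$ is equivalent to $\Hess d_A\ge -C$ in those directions, which is again the statement to be proved. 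Deferring this to \cite{Bangert}, \cite{Kleinjohann}, \cite{Ly-conv} would make the proof rest on precisely the result being established.

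The paper closes this step by a different and more elementary device, the same one used for $(4)\Rightarrow(1)$ in the proof of Proposition~\ref{prop: distance1}. Fix $x\in A$ and a special neighborhood $U_x$ of radius $r$; take $q_1,q_2$ in the ball $W$ of radius $r/3$ with midpoint $m$. If $m\in A$ the midpoint inequality is free since $d_A\ge 0=d_A(m)$. If $m\notin A$, the hypothesis $(2)$ forces $|\nabla_m d_A|=1$, so the gradient curve of $d_A$ from $m$ is a unit-speed geodesic which \emph{moves away from} $A$ and hence has length at least $r/3$ regardless of how close $m$ is to $A$; setting $z=\eta_m(r/3)$ one has $d_A(z)=d_A(m)+d(m,z)\le d_A(q_i)+d(q_i,z)$, and the uniform semiconcavity of $d_z$ at distance $\ge r/3$ converts this into $d_A(m)\le\frac12(d_A(q_1)+d_A(q_2))+\frac C8 d^2(q_1,q_2)$ with $C=C(U_x)$. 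This gives the semiconvexity directly on a full neighborhood of $A$, with no need for the squared distance, the Lipschitz continuity of $\Pi^A$, or any regularization across $A$. If you want to salvage your route, you would have to supply an independent second-order lower bound on $d_A$ near $A$ (e.g.\ via index-form estimates along the normal geodesics), which is considerably heavier than the supporting-function comparison above.
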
 
 
 See \cite{Bangert} and \cite{Ly-conv} for other characterizations.
 % of subsets of positive reach.
 
% For another folklore statement about subsets of positive reach we could not find appropriate references.  
 
Any subset of positive reach $A\subset M$ has a well-defined \emph{tangent cone} $T_xA$ at every point $x\in A$.  This tangent cone is a convex cone  in $T_xM$, \cite[Theorem 4.8]{Federer}. The \emph{normal cone}
$T ^{\perp}_x A$ is the convex cone of all vectors in $T_xM$ enclosing angles at least $\frac \pi 2$ with all vectors in $T_xA$.
% thus  the polar cone of $T_xA$ in $T_xM$. 

For the following  folklore statement about subsets of positive reach we could not find appropriate references. In the Euclidean case the result is contained in \cite{Federer}.

\begin{prop}\label{lem-norm-geod}
		Let $A\subset M$ be a subset of positive reach. Then for any  $x\in A$ and unit $h\in T^{\perp}_xA$ the geodesic $\gamma ^h$ starting at $x$ in the direction of $h$
		satisfies $d_A (\gamma ^h (s)) = s$, for all $s>0$ such that $\gamma ^h ([0,s])$  is contained in the open subset $O$ from Proposition \ref{prop: reach}.
	\end{prop}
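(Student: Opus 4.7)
The plan is to combine the gradient flow of $d_A$ on $O\setminus A$ (provided by Proposition~\ref{prop: reach}) with a clopen/connectedness argument on the parameter $s$.

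\emph{Gradient-flow setup.} By Proposition~\ref{prop: reach}, $d_A$ is $\mathcal{C}^{1,1}$ on $O\setminus A$. Uniqueness of the foot point and the first variation formula imply $|\nabla d_A|\equiv 1$ on $O\setminus A$; the standard fact that a $\mathcal{C}^{1,1}$ function with unit gradient has integral curves that are unit-speed geodesics then shows that the gradient flow of $d_A$ moves along geodesics of $M$ with $d_A$ growing at rate one. Reversing such a flow line from $y\in O\setminus A$ produces the unique minimizing geodesic of length $d_A(y)$ from $y$ to $\Pi^A(y)$. Moreover, for \emph{any} minimizing geodesic $\eta$ from $\Pi^A(y)$ to $y$, the identity $d_A(\eta(t))=t$ forces $\langle\nabla d_A(y),\dot\eta(|\eta|)\rangle=1$, so the equality case of Cauchy--Schwarz (both factors are unit vectors) yields $\nabla d_A(y)=\dot\eta(|\eta|)$.

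\emph{Clopen argument.} Set $s^\ast:=\sup\{s>0:\gamma^h([0,s])\subset O\}$ and
\[S:=\{s\in(0,s^\ast):d_A(\gamma^h(s))=s\}.\]
Continuity of $d_A\circ\gamma^h$ makes $S$ closed in $(0,s^\ast)$. For openness at $s_0\in S$ observe that
\[s_0=d_A(\gamma^h(s_0))\le d(x,\gamma^h(s_0))\le s_0,\]
so $\gamma^h|_{[0,s_0]}$ is a minimizing geodesic from $x$ to $\gamma^h(s_0)$. By uniqueness of foot points in $O$, $x=\Pi^A(\gamma^h(s_0))$, and the Cauchy--Schwarz remark above gives $\nabla d_A(\gamma^h(s_0))=\dot\gamma^h(s_0)$. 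The forward gradient flow from $\gamma^h(s_0)$ therefore continues $\gamma^h$, and $d_A$ grows at unit rate along it, so $F(s):=d_A(\gamma^h(s))$ satisfies $F(s)=s$ on an open neighborhood of $s_0$. Connectedness of $(0,s^\ast)$ reduces the problem to showing $S\neq\emptyset$.

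\emph{Base case.} The main obstacle is establishing $S\neq\emptyset$. I would invoke the tangent-cone expansion $d_A(\exp_x(v))=d_{T_xA}(v)+o(|v|)$ valid for sets of positive reach, together with $d_{T_xA}(sh)=s$ (since $h$ is in the polar cone), to obtain $F(s)/s\to 1$. To upgrade this asymptotic to an exact equality on some initial interval, I would analyse the comparison triangle $(x,p(s),\gamma^h(s))$ with $p(s):=\Pi^A(\gamma^h(s))$: any $p(s)\ne x$ would have direction from $x$ accumulating in $T_xA$ and hence making angle at least $\pi/2$ with $h$, forcing $d(p(s),\gamma^h(s))\ge s$ to leading order and contradicting $F(s)\le s$ unless $p(s)=x$. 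This base case is the delicate step; the rest of the argument is a clean application of the gradient-flow technology.
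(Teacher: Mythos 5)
Your gradient-flow setup and the clopen propagation step are sound: once $d_A(\gamma^h(s_0))=s_0$ for a single $s_0>0$, the segment $\gamma^h|_{[0,s_0]}$ is minimizing to $A$, uniqueness of foot points gives $\Pi^A(\gamma^h(s_0))=x$, the gradient $\nabla d_A(\gamma^h(s_0))$ equals $\dot\gamma^h(s_0)$, and the forward gradient curve (a unit-speed geodesic along which $d_A$ grows at rate one) continues $\gamma^h$. This is essentially the same propagation mechanism the paper uses. The problem is the base case, which you correctly flag as the delicate step but do not actually close. The tangent-cone expansion only yields $F(s)=s+o(s)$, i.e.\ first-order differentiability of $d_A$ at $x$ in the direction $h$; and your proposed upgrade via the foot point $p(s)$ breaks down precisely at the words ``to leading order.'' If $p(s)\neq x$, the angle condition gives at best $d(p(s),\gamma^h(s))\ge s-o(s)$, which is perfectly consistent with $F(s)=d(p(s),\gamma^h(s))<s$ (e.g.\ $F(s)=s-s^{2}$ violates nothing at first order). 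The entire content of the proposition is the passage from $F(s)=s+o(s)$ to the exact identity, and no first-order analysis at $x$ can deliver it.

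The paper's proof supplies exactly the missing mechanism, and it is worth seeing how it sidesteps the base case at $x$ altogether. Semiconvexity of $f=d_A$ near $A$ (condition (3) of Proposition \ref{prop: reach}, which you did not use) upgrades $(f\circ\gamma^h)'(0)=1$ to $\lim_{t\to0^{+}}(f\circ\gamma^h)'(t)=1$, since the right derivative of a semiconvex function is essentially nondecreasing while $f$ is $1$-Lipschitz. By the first variation formula this pins down the direction of the shortest geodesic from $\gamma^h(t)$ to $A$, hence the angle between $\nabla f(\gamma^h(t))$ and $\dot\gamma^h(t)$ tends to $0$. The gradient curves $\eta^t$ starting at $\gamma^h(t)$ are geodesics defined on a fixed interval $[0,s_0]$ along which $f$ grows at unit speed; by continuous dependence of geodesics on initial conditions they converge to $\gamma^h$ as $t\to0$, and passing to the limit in $f(\eta^t(s))=f(\gamma^h(t))+s$ gives $f(\gamma^h(s))=s$. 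In short: instead of proving the identity at arbitrarily small $s$ directly, one launches the (well-defined, geodesic) gradient flow from points $\gamma^h(t)$ just off $A$ and lets $t\to0$. You should replace your base-case sketch with this limiting argument; with it in place, your clopen step becomes redundant, since the limit already yields the identity on all of $[0,s_0]$.
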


\subsection{Manifolds with two sided curvature bounds} \label{subsec: intro}
Here we discuss some basic observations about non-smooth Riemannian manifolds which have two sided curvature bounds in the sense of Alexandrov and 
the extensions of the above results to this setting. These results have been applied in \cite{KL}. Readers only interested in the smooth situation can skip this part of the introduction together with Section \ref{sec: last}.

Manifolds with two sided curvature bounds in the sense of Alexandrov appear in one of  the most prominent examples 
of Gromov--Hausdorff convergence. Namely, the class of such compact manifolds with uniform bounds on injectivity radius, diameter and curvature is compact with respect to the Gromov--Haudorff convergence, \cite{Ber-Nik}, \cite{Gromov}, \cite{Peters}.
It provides   a natural compactification  of the corresponding class of smooth Riemannian manifolds.  Moreover, all manifolds with two sided curvature bounds in the sense of Alexandrov turn out to  have a rather regular analytic structure and to admit a smoothing, as has been proved by Nikolaev in a series of papers.
%This statement (together with the description of  the exact limiting procedure) has been obtained by Nikolaev in a series of papers.
 A  good readable  summary of these results of Nikolaev and related statements on the structure of such manifolds 
%he papers analyzed the 
% Metric spaces  with two sided curvature bound in the sense of Alexandrov appeared prominently in the work of Cheeger--Fukaya--Gromov as limit spaces of compact Riemannian manifold with uniform bounds on sectional curvatures and injectivity radius. Such metric spaces
 %have been analyzed in depth in several papers of Nikolaev. 
 %A  good readable  summary of the results of Nikolaev and related statements
  has appeared in \cite{Ber-Nik}.  
 Results and ideas of Nikolaev motivated many  theorems in the theory of Alexandrov spaces with one-sided curvature bounds,  \cite{BGP}, \cite{Kap-Kell-Ket-19},  \cite{Per-DC}, \cite{OS},  \cite{Petruninpar}, \cite{Lytchak-Nagano18}. 
 
  %The  aim of the this note is to collect a few results around Nikolaev's theory related to distance functions in such spaces.  Probably, many of these results are folklore; since we could not find  appropriate  references, sometimes even in the smooth setting, we decided to write this note. 

  % The precise curvature bounds do not play any role in this text.
   % and all results are of local nature.
   % so  completeness  will not play any role either.
    We will assume some familiarity with the theory of Alexandrov spaces, and refer the non-familiar reader to   \cite{AKP}.  
    The most appropriate setting for our  local results is the following one. 
    
    \begin{defn} \label{def}
    A locally compact length metric space $X$ has two sided bounded curvature if for any point $x\in X$ there exists a compact convex neighborhood $U$ and some $K>0$ such that $U$ is an Alexandrov space of curvature $\geq -K$ and a $CAT(K)$ space.
    \end{defn}

   %  Rescaling we may replace $K$ by $1$ or by any small number.

 A space $X$ with two sided bounded curvature is topologically a manifold  $M$ with boundary $\partial M$.  Moreover, $M\setminus \partial M$  is convex in $M$, \cite{Ber-Nik}, cf. 
 \cite{Kap-Kell-Ket-19}. We restrict  the attention to the case $\partial M =\emptyset$.  
 
 By a \emph{manifold with two sided curvature bounds} we will denote a space as in the above Definition   \ref{def} which, in addition,  is homeomorphic to a manifold without boundary.

 Any manifold $M$ with two sided curvature bounds admits a natural atlas of distance coordinates,
 see \cite{Ber-Nik} and Section \ref{sec: dist} below. 
 %  This   is the unique $\mathcal C^{1,1}$ atlas on $M$, such that for any point $x\in M$ 
 %  the squared distance function $d_x^2$ is $\mathcal C^{1,1}$ on a neighborhood of $x$ in $M$.
     The distance in $M$ is defined by a $\mathcal C^{0,1}$
 Riemannian metric $g$ in  this atlas. Moreover, the $\mathcal C^{1,1}$-smoothness of the atlas 
 and the $\mathcal C^{0,1}$-smoothness of the Riemannian metric is optimal, as can be observed in the manifold $M$ arising from the gluing of a flat cylinder and a hemisphere.
  Results of this type with weaker conclusions  have been obtained  for  distance coordinates under one-sided curvature bounds, \cite{Per-DC}, \cite{OS}, \cite{Lytchak-Nagano18}.

 On any $\mathcal C^{1,1}$ manifold $M$  with a  Lipschitz continuous Riemannian metric, any harmonic function is  $\mathcal C^{1,\alpha}$, for all $\alpha <1$, \cite[III, Chapter 9]{Taylor} and 
 there exist harmonic coordinates around any point, \cite[III, Chapter 9]{Taylor}.  The atlas of harmonic coordinates is of class $\mathcal C^{2,\alpha}$, \cite[p. 689]{Sabitov}
and the distance is defined by a metric of class $\mathcal C^{\alpha}$ in these coordinates.  One of the  central results of Nikolaev's theory, see \cite{Ber-Nik}, is that for any 
 manifold $M$ with two sided curvature bounds,  the harmonic atlas 
 is of class $\mathcal C^{3,\alpha}$, for any $\alpha <1$, and the Riemannian metric in this atlas is of class  $\mathcal C^{1,\alpha}$, for any $\alpha<1$.

 In a general $\mathcal C^{1,1}$-manifold with a $\mathcal C^{0,1}$ Riemannian metric a harmonic function does not need to be of class $\mathcal C^{1,1}$, cf. \cite[p. 693]{Sabitov} and Problem \ref{problem}  below.  Our first observation is that such a  loss of smoothness cannot happen on a manifold with two sided bounded curvature.

 \begin{prop} \label{prop: harm}
 	Let $M$ be a manifold with two sided bounded curvature. Then any harmonic function 
 	on an open subset $U$ in $M$ is of class $\mathcal C^{1,1}$ in distance coordinates. Thus,  any transformation from distance to harmonic coordinates is of class 
 	$\mathcal C^{1,1}$.
 \end{prop}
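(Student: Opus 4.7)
The plan is to route everything through Nikolaev's harmonic atlas, in which $M$ is considerably smoother, and then pass back to distance coordinates.

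Fix $x \in U$ and a small compact neighborhood $V$ on which, by \dref{def}, $M$ is simultaneously an Alexandrov space of curvature $\geq -K$ and a $CAT(K)$-space. Choose harmonic coordinates on a smaller neighborhood; by Nikolaev's theorem recalled in the introduction, the Riemannian metric in these coordinates is of class $\mathcal{C}^{1,\alpha}$ for every $\alpha<1$. Since the coordinate functions are themselves harmonic, the first order Christoffel terms in the Laplace operator vanish and the equation $\Delta u = 0$ reduces to $g^{ij}\partial_i\partial_j u = 0$. Iterating standard Schauder estimates for elliptic equations with $\mathcal{C}^{1,\alpha}$ coefficients then yields $u\in\mathcal{C}^{3,\alpha}_{\mathrm{loc}}$ in harmonic coordinates; in particular $\nabla u$ is locally Lipschitz there.

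The crux of the argument is to show that each distance coordinate $d_{p_i}$, expressed in the harmonic chart, is of class $\mathcal{C}^{1,1}$. For this I would invoke \pref{prop: distance}, whose proof is based only on semi-concavity and semi-convexity of distance functions and hence transfers to the harmonic chart with its $\mathcal{C}^{1,\alpha}$ metric. The Alexandrov lower bound supplies semi-concavity of $d_{p_i}$ on a punctured neighborhood of $p_i$, and the $CAT(K)$ upper bound supplies semi-convexity on a slightly smaller punctured neighborhood (on any ball of radius $<\pi/(2\sqrt{K})$ missing $p_i$, $d_{p_i}$ is $\lambda$-convex for a $\lambda$ depending only on $K$ and the radius). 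The equivalence (1)$\Leftrightarrow$(2) in \pref{prop: distance} then forces $d_{p_i}\in\mathcal{C}^{1,1}$ in harmonic coordinates.

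Consequently, the change of coordinates $\Phi$ from harmonic to distance coordinates is $\mathcal{C}^{1,1}$ with invertible differential at the base point; writing $D(\Phi^{-1}) = (D\Phi \circ \Phi^{-1})^{-1}$ and applying Cramer's rule shows that the reverse transition $\Phi^{-1}$ is $\mathcal{C}^{1,1}$ as well, giving the second assertion of the proposition. For the first assertion I would simply compose: the representative of $u$ in distance coordinates equals $u_{\mathrm{harm}} \circ \Phi^{-1}$, where $u_{\mathrm{harm}}$ is $\mathcal{C}^{3,\alpha}$. By the chain rule $\nabla(u_{\mathrm{harm}}\circ\Phi^{-1}) = (\nabla u_{\mathrm{harm}}\circ\Phi^{-1})\cdot D\Phi^{-1}$; the first factor is Lipschitz as the pull-back of a Lipschitz function under a Lipschitz map, and the second factor is Lipschitz, so the product is Lipschitz. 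Hence $u\in\mathcal{C}^{1,1}$ in distance coordinates. The main obstacle is pinning down the $\mathcal{C}^{1,1}$ regularity of distance functions in the harmonic chart: one must verify both that the semiconvex $\Rightarrow\mathcal{C}^{1,1}$ implication of \pref{prop: distance} remains valid when the ambient metric is only $\mathcal{C}^{1,\alpha}$, and that $CAT(K)$ provides the required semi-convexity in a quantitative form uniform on the chosen neighborhood. Once these two points are in place, everything else is routine chain-rule bookkeeping with Lipschitz norms.
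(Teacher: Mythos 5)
Your argument is correct in outline but takes a genuinely different route from the paper's at the one crucial point, namely why the transition map between distance and harmonic coordinates is $\mathcal C^{1,1}$. The paper gets this in one stroke: the transition map is a Riemannian isometry between two open subsets of $\R^n$ carrying Lipschitz metrics (Lipschitz in distance coordinates by Berestovskii--Nikolaev, $\mathcal C^{1,\alpha}$ in harmonic coordinates by Nikolaev), and it then invokes the theorem of Sabitov (see also Lytchak--Yaman and Taylor) that any such isometry is automatically $\mathcal C^{1,1}$. You instead derive the regularity of the transition map from the two-sided curvature bounds directly, by showing that each distance coordinate $d_{p_i}$ is semiconcave (lower bound) and semiconvex ($CAT(K)$) and hence, via the equivalence (1)$\Leftrightarrow$(2) of \pref{prop: distance}, of class $\mathcal C^{1,1}$ when read in the harmonic chart; the rest of your argument (invertibility of the differential from the biLipschitz property, Cramer's rule for the inverse, chain rule for the composition with the $\mathcal C^{3,\alpha}$ harmonic representative) is routine and sound. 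The step you correctly flag as the main obstacle is real but fillable: to convert metric semiconcavity/semiconvexity into affine semiconcavity/semiconvexity in the harmonic chart one needs the analogue, for harmonic charts, of the unnamed Lemma of Section \ref{sec: dist}, i.e.\ that metric midpoints differ from affine midpoints by $O(d^2(q_1,q_2))$; this holds because the Christoffel symbols of the $\mathcal C^{1,\alpha}$ metric are continuous, so minimizing geodesics solve the geodesic ODE and are uniformly $\mathcal C^{1,1}$ curves in the chart. The trade-off is clear: the paper's proof is shorter but leans on a nontrivial external regularity theorem for isometries, while yours stays within the semiconcavity toolkit already developed in the paper at the cost of one extra (standard) lemma about geodesics in $\mathcal C^{1,\alpha}$ charts.
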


 Thus, the $\mathcal C^{1,1}$ atlas of distance coordinates can be assumed to include all harmonic coordinates as well. From now on all statements will refer to this $\mathcal C^{1,1}$ atlas.
 The following result might  be folklore knowledge:
 % as well although we have not seen it mentioned in leterature.  
 \begin{prop} \label{prop: submanifold}
 	Let $M$ be a manifold with two sided curvature bounds and   let $N\subset M$ be a 
 	$\mathcal C^{1,1}$ submanifold.   Then $N$ with its intrinsic metric 
 	is a manifold with two sided curvature bounds.
 \end{prop}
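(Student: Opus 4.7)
The plan is to reduce to the classical smooth Gauss equation via Nikolaev's smoothing and then pass to a Gromov--Hausdorff limit using stability of two-sided curvature bounds.

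Working locally around any $p \in N$ in the $\mathcal{C}^{1,1}$ harmonic atlas provided by \pref{prop: harm}, the ambient Riemannian metric $g$ on $M$ is of class $\mathcal{C}^{1,\alpha}$ for every $\alpha<1$, and $N$ is of class $\mathcal{C}^{1,1}$. By Nikolaev's smoothing theorem (see \cite{Ber-Nik}), one can approximate $g$ in a fixed chart by a sequence of smooth Riemannian metrics $g_n \to g$ in $\mathcal{C}^{1,\alpha}$ satisfying a uniform two-sided sectional curvature bound $|\sec(g_n)| \le K_0$. I would then mollify the defining functions of $N$ inside the chart to obtain smooth submanifolds $N_n$ converging to $N$ in $\mathcal{C}^{1,\beta}$ for every $\beta<1$, so that the second fundamental forms $II_n$ of $N_n$ with respect to $g_n$ are uniformly bounded in $L^\infty$ on a fixed smaller neighborhood. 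This uniform bound is forced by the Christoffel symbols of $g_n$ being uniformly $\mathcal{C}^\alpha$-bounded while the defining functions of $N_n$ are uniformly $\mathcal{C}^{1,1}$-bounded.

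Applying the classical Gauss equation,
\[
\sec_{N_n}(u,v) = \sec_{g_n}(u,v) + \langle II_n(u,u), II_n(v,v)\rangle - |II_n(u,v)|^2,
\]
the sectional curvatures of the smooth Riemannian manifolds $(N_n, g_n|_{TN_n})$ are uniformly bounded by some $K_1$ depending only on $K_0$ and the $\mathcal{C}^{1,1}$-norm of $N$. Hence on a fixed convex neighborhood of $p$, each $N_n$ with its intrinsic metric is simultaneously $CAT(K_1)$ and an Alexandrov space of curvature $\ge -K_1$. Since $g_n|_{TN_n} \to g|_{TN}$ uniformly, the intrinsic metric spaces $N_n$ converge in the Gromov--Hausdorff sense to $N$ equipped with its intrinsic metric, and the Gromov--Hausdorff stability of upper curvature bounds and of lower curvature bounds (see \cite{AKP}) then yield precisely the conclusion required by \dref{def}. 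The main obstacle is the first step: producing the smooth approximations $N_n \subset (M, g_n)$ with uniform $L^\infty$ bounds on $II_n$ in the varying ambient metric; once this is settled, the remainder is essentially classical.
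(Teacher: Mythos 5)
Your strategy for the \emph{lower} curvature bound is exactly the paper's: smooth the ambient metric via Nikolaev's approximation theorem, mollify $N$ inside a distance chart to get submanifolds $N^\epsilon$ with uniform $\mathcal C^{1,1}$ bounds and hence uniformly bounded second fundamental forms, apply the Gauss equation, and pass to the Gromov--Hausdorff limit. For lower bounds this limiting step is unproblematic: Toponogov globalization makes the Alexandrov condition on each compact $N^\epsilon$ a global one, and lower curvature bounds are stable under Gromov--Hausdorff convergence.

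The gap is in your treatment of the \emph{upper} bound. A smooth manifold with $\sec \le K_1$ is only \emph{locally} $CAT(K_1)$, on balls whose size is controlled by the injectivity radius (a thin flat cylinder has $\sec=0$ but contains short closed geodesics, so small metric balls in it need not be $CAT(0)$ at any fixed scale). There is no globalization in the upper-bound direction analogous to Toponogov, and ``local $CAT(K_1)$'' is not preserved under Gromov--Hausdorff limits unless the $CAT(K_1)$ neighborhoods of the approximating spaces have a uniform size. So to run your argument you would additionally need a uniform lower bound on the injectivity (or convexity) radius of the $(N_n, g_n)$ --- plausible here since the $N_n$ are uniformly bi-Lipschitz to a fixed Euclidean ball and have two-sided curvature bounds, but this requires a Klingenberg/Cheeger-type argument that you do not supply, and it is precisely the delicate point. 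The paper sidesteps this entirely: it obtains the $CAT(\kappa)$ property of $N$ directly from the results of \cite{Ly-conv} and \cite{Ly-reach} on $\mathcal C^{1,1}$ submanifolds of spaces with upper curvature bounds, and uses the smoothing--Gauss--limit argument only for the lower bound. Either supply the uniform injectivity radius estimate or replace that half of your argument by the citation.
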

 
   This result follows from the Gau\ss $ $ equation and another central result of Nikolaev's theory, stating that manifolds with two sided curvature bounds are exactly the limit spaces (in a precise local sense)  of smooth Riemannian manifolds with uniform bounds on sectional curvature.

 The final statement discussed in the introduction is that all results about distance functions in smooth Riemannian manifolds are valid in this more general setting:
 
 \begin{thm}
 The statements of Proposition \ref{prop: distance}, Proposition \ref{prop: cut} and Proposition \ref{prop: reach} are valid for any closed subset $A$ of any manifold $M$ with two sided bounded curvature.  The differentiability 
 in Propositions \ref{prop: distance}, \ref{prop: reach} is considered with respect to distance coordinates.
 	\end{thm}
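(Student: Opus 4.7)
My plan is to trace through the three proofs of \pref{prop: distance}, \pref{prop: cut} and \pref{prop: reach} (which, as emphasized in the introduction, use no Jacobi fields and only local information) and to verify that every step rests on ingredients available on any manifold $M$ with two sided bounded curvature. The key ingredients I will need are: (a) semi-concavity of $d_A$ on $M\setminus A$, with a local semi-concavity constant determined by the Alexandrov lower curvature bound; (b) existence and uniqueness of the gradient flow of a semi-concave function, developed for Alexandrov spaces in \cite{AKP}; and (c) local non-branching of geodesics and unique prolongation of a geodesic with a prescribed starting direction, supplied by the $CAT(K)$ upper bound. All three are built into \dref{def}.

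The one genuinely new point in the synthetic setting is to reconcile the metric notions of semi-concavity and semi-convexity with the coordinate notions of $\mathcal C^1$ and $\mathcal C^{1,1}$ appearing in \pref{prop: distance} and \pref{prop: reach}. By Nikolaev's theory and by \pref{prop: harm}, the $\mathcal C^{1,1}$ distance chart carries a Lipschitz Riemannian metric, hence the Christoffel symbols belong to $L^\infty$. I would use this to establish a dictionary: a locally Lipschitz function $f$ on a chart domain is metrically semi-concave (resp. semi-convex) if and only if its coordinate expression is Euclidean semi-concave (resp. semi-convex) up to an additive quadratic correction with bounded coefficients. In particular, $f$ is $\mathcal C^{1,1}$ in distance coordinates if and only if it is both metrically semi-concave and metrically semi-convex.

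With this dictionary in hand, \pref{prop: distance} transfers directly: (a) yields the semi-concavity of $d_A$; the equivalence $(1)\Leftrightarrow (2)$ is the dictionary; $(2)\Rightarrow(3)$ is trivial; $(3)\Rightarrow(4)$ follows because on a $\mathcal C^1$ neighborhood of $y$ the initial direction of $\gamma_y$ is forced to be $-\nabla f(y)/|\nabla f(y)|$, so non-branching from (c) produces uniqueness; and $(4)\Rightarrow(1)$ is the gradient-flow argument from the smooth proof, which uses only (b). \pref{prop: cut} is already phrased synthetically and is essentially \cite[Prop. 14.1.5]{AKP}. For \pref{prop: reach}, positive reach is a metric notion that carries over verbatim, and the arguments of Bangert and Kleinjohann deriving semi-convexity of $d_A$ near $A$ from positive reach use only semi-concavity of $d_A^2$ together with local comparison, both available under two sided bounded curvature; the equivalence with $\mathcal C^{1,1}$ in distance coordinates then follows from the dictionary and from the uniqueness part of (b) applied as in the smooth setting.

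The hardest step will be the dictionary in the second paragraph, because in distance coordinates the Riemannian metric is only Lipschitz, so Riemannian Hessians exist only almost everywhere. The delicate part is to verify that an a.e.\ lower bound on $\Hess f$ (in the Euclidean sense, up to the bounded Christoffel correction) is equivalent to the uniform pointwise semi-convexity demanded by the synthetic definition. I expect this to be handled by standard Rademacher-type arguments combined with the $L^\infty$ bound on the Christoffel symbols; but it is the one place where Nikolaev's fine structure theory, rather than just the axioms of \dref{def}, is used in an essential way.
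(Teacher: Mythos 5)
Your overall plan is correct and matches the paper's strategy in broad outline (localize, use semiconcavity and gradient flows, then translate between metric and coordinate regularity), but the one step you single out as hardest --- the ``dictionary'' between metric semiconcavity and Euclidean semiconcavity in a distance chart --- is handled in the paper by a much lighter, purely metric argument than the one you propose. The paper never touches Christoffel symbols, a.e.\ Hessians, or Rademacher-type arguments: it proves (Lemma 3.1) that a distance chart $F$ preserves the class of semiconcave functions by using the midpoint characterization \eqref{eq: semi}. Since each coordinate $f_i=d_{p_i}$ is both semiconcave and semiconvex and $F$ is biLipschitz, $F$ sends geodesic midpoints to points within $C\cdot d^2(q_1,q_2)$ of Euclidean midpoints (and conversely), and this quadratic error is exactly what the midpoint inequality tolerates; combined with the elementary fact that on open subsets of $\R^n$ a function is $\mathcal C^{1,1}$ iff it is semiconcave and semiconvex, this gives $(1)\Leftrightarrow(2)$ with no elliptic or measure-theoretic input. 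In particular, Proposition \ref{prop: harm} (harmonic coordinates) is not needed for the three propositions --- the Lipschitz continuity of the metric in distance coordinates is \cite[Theorem 13.2]{Ber-Nik} directly. Two further ingredients that your sketch leaves implicit but that the paper makes explicit and that you would need to supply: the reduction of $d_A$, near a point remote from $A$, to $d_B$ for a compact $B$ inside a single special neighborhood (Subsection 2.5; without this, semiconcavity of $d_A$ and the gradient-flow invariance are not local statements in a non-complete $M$), and the $\cosh$-comparison argument (Lemma 2.2) showing that $|\nabla f|<1$ propagates along gradient curves, which is the quantitative core behind both the cut-locus invariance and the implication $(4)\Rightarrow(1)$; citing \cite[Prop.~14.1.5]{AKP} for the former is legitimate, but the paper reproves it. Your route through $L^\infty$ Christoffel symbols could probably be made to work, but it imports exactly the delicacy you flag, whereas the paper's midpoint lemma shows that none of Nikolaev's fine structure theory beyond the biLipschitz distance charts is needed for this theorem.
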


 \subsection{A few questions}
 
 %The following are two central result obtained by Nikolaev: Every topological manifold with both-sided curvature bound
 %carries a smooth structure and a Riemannian metric of class $\mathcal C^{1,\alpha}\cap W^{2,p}$ for any $0<\alpha<1, 1\le p<\infty$.

%Theorem \ref{thm: Ricci}  follows from well-known results about  Ricci flows starting on smooth metrics, \cite{}, \cite{} and
%the approximation results obtained in a series of papers of Nikolaev.  Since the proofs of Nikolaev are rather involved, long and some of them not  readily avaliable (??), 
%we provide in the appendix a somewhat  shorter arguments for the analytical parts of Nikolaev's theory. 

%The first statement about distance coordinates has been noted by Alexandrov and publishes by Nikolaev.   

We would like to finish the introduction with a few open questions about manifolds with two sided curvature bounds.

\begin{quest} \label{problem}
Does there exist a Riemannian metric of class $\mathcal C^{0,1}$ and a harmonic function with respect to this metric, which is not $\mathcal C^{1,1}$?	
\end{quest} 

It is possible that a positive answer to  this problem  might  be obtained following the ideas in the examples discussed in   \cite[p. 693]{Sabitov}.

The second part of the next problem  is motivated by Proposition \ref{prop: submanifold}. It should be compared with 
Nash's embedding theorem for Riemannian manifolds of higher regularity, see \cite{Andrews}.

\begin{quest}
Let $M$ be a (compact, complete, local) manifold with  two sided curvature bounds in the sense of Alexandrov.  Do there
exist coordinates on $M$ in which the distance is defined by a Riemannian metric of class $\mathcal C^{1,1}$?
Can $M$ be length-preserving embedded as a $\mathcal C^{1,1}$ submanifold in a Euclidean  space? 
\end{quest}

 We would like to mention that the folklore argument providing a negative answer to the first question  above,  for instance, \cite{Peters}, is not correct.
Indeed, \cite{Peters}
provides a surface with two sided curvature bounds such that in harmonic coordinates
  the Riemannian metric is not of class $\mathcal C^{1,1}$. Then the proof invokes the statement of \cite{KZ}, \cite{Sabitov} that the smoothness of the metric is optimal in harmonic coordinates.  However, this elliptic regularity statement is not covered by \cite{KZ} (since elliptic regularity does not work well for Lipschitz functions). Indeed, the following result has appeared in \cite[Example 2]{Sabitov} and provides a  counter-example to the folklore proof.  There exists a Riemannian metric of class $\mathcal C^2$ on $\R^2$ 
  %(which therefore has bounded curvature at least locally), 
  such that
	in any harmonic coordinates the metric is not $\mathcal C^{1,1}$.

The second question concerns the existence of a canonical smoothing of  manifolds with two sided curvature bounds.
  While the existence part in the next question is a direct consequence of  the approximation results of Nikolaev and \cite{Shi}, the uniqueness is  more subtle:

\begin{quest}
 Let $M$ be a complete manifold with two sided bounded curvature. If the curvature bounds can be chosen uniformly on $M$ then  there exists a unique  Ricci flow coming out  of the manifold $M$.
\end{quest}

The final problem we would like to mention is due to the following fact. The theory of Nikolaev is scattered through several works and some of them are not easy to read (and to find).  Thus, we formulate:
\begin{quest}
	Find a streamlined proof of Nikolaev's result on smoothings of manifolds with two sided curvature bounds, \cite{Nik-closure}, \cite{Nik-synge}.  	
\end{quest}

%The first of these is a smoothing result showing that a metric with two sided curvature bounds on a manifold $(M,d)$ can be smoothed out almost preserving original Alexandrov bounds. This is proved in ~\cite{Nik-closure}. However, the proof there relies on results of \cite{Nik-synge}  which is not readily accessible. We provide a different  more streamlined proof, not relying on \cite{Nik-synge}.  

\subsection{Structure of the paper}
In Section \ref{sec: sing}, we recall basics on semiconcave functions and their gradient flows and  verify a local  version of Proposition \ref{prop: cut}.  In Section \ref{sec: dist}, we recall
basic facts about distance coordinates. In Section \ref{sec: main}, we prove Proposition \ref{prop: distance} and \ref{prop: cut}.  In Section \ref{sec: reach}, we prove the stated results about subsets of positive reach, Propositions \ref{prop: reach}, \ref{lem-norm-geod}. 
Finally, in Section \ref{sec: last}, we prove Proposition \ref{prop: harm} and Proposition \ref{prop: submanifold}.

\section{Semiconcavity and gradient flows} \label{sec: sing}
\subsection{Notation}
Distance will be denote by $d$. The distance function from a subset $A$ of a space $X$ will be denote by $d_A$. By definition, this is a $1$-Lipschitz function. A geodesic  will denote an isometric embedding $\gamma:I\to X$ of an interval. Thus, our geodesics are always parametrized by arclength and globally minimizing.

\subsection{Special neighborhood} \label{subsec: spec}
Let $M$ be a manifold with two sided curvature bounds. For any point $x\in M$, we find a compact neighborhood $U$ of $x$ as in Definition \ref{def}.  Restricting the neighborhood and using convexity of small balls in $CAT(K)$ spaces we  may assume that the neighborhood $U=U_x$ has the following form.

The set $U_x$ is the closed ball of radius $r_x$ around $x$ and it is homeomorphic to a Euclidean ball, \cite[Theorem 12.1]{Ber-Nik}.   Any pair of points in $U_x$ is connected by a unique geodesic in $U_x$.
For some $K_x>0$, the space $U_x$ is $CAT(K_x)$ and an Alexandrov space of curvature $\geq -K_x$, moreover, $\epsilon_x :=r_x\cdot K_x <<1$.
Any geodesic in $U_x$  extends to a geodesic starting and ending on the distance sphere  $\partial U_x$, \cite[Prop. 8.3]{Ber-Nik}. 
%The exponential map $\exp_x$ is well defined on the $r_x$ ball around $0$ in the Euclidean space $T_xM$ and is a biLipschitz map between this Euclidean ball and $U_x$.

\subsection{Semiconvexity, semiconcavity and gradient flows}
A locally Lipschitz function $f$ on an open subset $O$ of $M$ is  \emph{$C$-concave}, respectively \emph{$C$-convex},
if for any geodesic $\gamma:I\to O$  the function $f\circ \gamma  (t) -\frac C 2 t^2 $
is concave, respectively convex, on $I$.

A function $f$ is $C$-concave if and only if for all pairs of points $p_1,p_2 \in M$ which are sufficiently close to each other and any midpoint $m$ between $p_1$ and $p_2$, we have
\begin{equation} \label{eq: semi}
f(m) \geq \frac 1 2 (f(p_1) +f(p_2)) - \frac C 8 \cdot d^2(p_1,p_2)\;.
\end{equation}

The function $f:O\to \R$ is  \emph{semiconcave} (semiconvex) if for any $x\in O$ there is some 
$C \in \R$, such that the restriction of $f$ to some neighborhood of $x$ in $O$ is $C$-concave
($C$-convex).

If $f:O\to \R$ is semiconcave and $h:O\to \R$ is continuous, we say that 
the function $f$ is $h$-concave, if  for any $x\in O$ and any $\epsilon >0$, there exists a neighborhood $O_{\epsilon}$ of $x$
such that  $f$ is $(h(x)+\epsilon)$-concave in $O_{\epsilon}$.  

This is equivalent to the requirement that, 
for any geodesic $\gamma:I\to O$, we have 
$(f\circ \gamma ) '' \leq h\circ \gamma$ on $I$ in the sense of distributions.

Since the notion of semiconcavity and of gradient curves and flows of semiconcave functions is
local, the whole theory of gradient flows in Alexandrov spaces, \cite{AKP}, \cite{Petrunin-semi}, applies to the present situation.

For any semiconcave function $f:O\to \R$ and every $x\in O$ there exists a unique vector $\nabla _xf \in T_xO$, the \emph{gradient} of $f$ at $x$. Moreover, there exists a unique  maximal curve
$\eta _x:[0,a) \to O$ with some  $a\in (0, \infty)$, the gradient curve of $f$, which starts in $x$ and satisfies 
$$\eta_x'(t)
=\nabla_{\eta_x(t)} f \; \text{and} \;  (f\circ \eta _x)' (t) = |\nabla _{\eta_x (t)} f| ^2 \;,$$ for all $t \in [0,a)$.   Furthermore, if $a<\infty$ then  $\eta_x([0,a))$ is not contained in a compactum in $O$.  Finally, the map
$(x,t )\to \Phi (t,x):= \eta _x (t)$ is a local flow  defined on a neighborhood of $O\times \{0 \} \subset O\times [0,\infty)$ is locally Lipschitz continuous, \cite{Petrunin-semi}.

A point $x\in O$ is \emph{critical} for $f$ if  $\nabla _x f =0$. In this case $\eta_x$ is the stationary curve $\eta _x(t)=x$.  On the other hand, if the curve $\eta _x$ does not contain critical points of $f$, it has a unique parametrization $\tilde \eta _x :[0,\tilde a)\to O$ by arclength.

The following is a not very well-known but fundamental observation. The first statement is exactly    \cite[Theorem 14.1.3]{AKP}; the second one follows from the first  by localization:

\begin{lem} \label{lem: restr}
	Let $f:O\to \R$ be $C$-concave.  Then for the arclength reparametrization $\tilde \eta _x$
	of any gradient curve $\eta _x$ of $f$ in $O$, the composition $f\circ \tilde \eta _x:[0,a)\to \R$ is $C$-concave.

If $f$ is $h$-concave for a continuous function $h:O\to \R$ then 
$f\circ  \tilde \eta_x$ is $h\circ \tilde \eta_x$-concave on $[0,a)$.
\end{lem}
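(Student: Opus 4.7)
The first assertion is quoted verbatim from \cite[Theorem 14.1.3]{AKP}, so nothing new is needed there. My plan for the second assertion is a standard localization: at each point of the arclength reparametrized curve I replace $f$ by its restriction to a tiny neighborhood on which $f$ \emph{is} actually $C$-concave with an explicit $C$, and then invoke the first assertion.

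Concretely, I would fix $t_0 \in [0,\tilde a)$ and $\eps>0$, set $y:=\tilde\eta_x(t_0)$ and $\lambda:=h(y)+\eps$. By the definition of $h$-concavity, choose an open neighborhood $V\subset O$ of $y$ on which $f$ is $\lambda$-concave. Because $\tilde\eta_x$ is continuous (in fact unit speed), pick an open interval $J \ni t_0$ with $\tilde\eta_x(J)\subset V$. The restriction of $\eta_x$ to the preimage of $J$ is still a gradient curve of the semiconcave function $f|_V$ (the gradient is defined locally), so $\tilde\eta_x|_J$ is the arclength reparametrization of a gradient curve of $f|_V$. The first assertion of the lemma, applied to the $\lambda$-concave function $f|_V$, then gives that $f\circ\tilde\eta_x$ is $\lambda$-concave on $J$, i.e.\ $(h(y)+\eps)$-concave on a neighborhood of $t_0$. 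Since $t_0$ and $\eps$ were arbitrary and $h\circ\tilde\eta_x$ is continuous, this is the definition of $h\circ\tilde\eta_x$-concavity of $f\circ\tilde\eta_x$ on $[0,\tilde a)$.

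The main (minor) obstacle is just matching conventions: the $h$-concavity definition in the paper is phrased for functions on open subsets of $M$ using geodesics, while the conclusion concerns a function on an interval. Since geodesics in $\R$ are just subintervals, the two notions agree tautologically, and the pointwise bound $(f\circ\tilde\eta_x)''\le h\circ\tilde\eta_x$ in the distributional sense obtained above is equivalent to the $h\circ\tilde\eta_x$-concavity of $f\circ\tilde\eta_x$. One should also make sure the reparametrization $\tilde\eta_x$ exists on the relevant interval, but this is guaranteed by the standing assumption that $\eta_x$ contains no critical points of $f$.
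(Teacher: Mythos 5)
Your proposal is correct and follows exactly the route the paper takes: the first assertion is quoted from \cite[Theorem 14.1.3]{AKP}, and the second is deduced from the first by the localization argument you spell out (the paper states this in one line, so your write-up merely supplies the details, including the correct handling of the time shift when restricting the gradient curve and the existence of the arclength reparametrization).
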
  

%We will use a localized version of this Lemma.  For a continuous function
%$h:O\to \R$, a locally Lipschitz function $f:O\to \R$ is \emph{$h$-concave}, if the following holds true.  For any $x\in O$ and any $\epsilon >0$, there exists a neighborhood $O_{\epsilon}$ of $x$
%such that  $f$ is $(h(x)+\epsilon)$-concave in $O_{\epsilon}$.  

%This is equivalent to the requirement that 
% for any geodesic $\gamma$ in $O$ we have 
%$(f\circ \gamma ) '' \leq h\circ \gamma$ in the sense of distributions.

%Localizing  Lemma \ref{lem: restr} we immediately  obtain:
%\begin{cor} \label{cor: restr}
%	Let $h:O\to \R$ be continuous and let the locally Lipschitz function
%let $f:O\to \R$ be $h$-concave. Then, for the arclength reparametrization $\tilde \eta _x$
%of any gradient curve $\eta _x$ of $f$ in $O$, the composition $f\circ \tilde \eta _x:[0,a)\to %\R$ is $h\circ \eta _x$-concave on $[0,a)$.  Thus, in the distributional sense,
%$$(f\circ \tilde \eta_x) '' \leq h \circ  \tilde \eta _x \;.$$
%\end{cor}

\subsection{Distance functions in special neighborhoods} \label{subsec: distspec}
Let $U\subset M$ be a special neighborhood of some point as in Subsection \ref{subsec: spec}.
Thus, $U$ is convex, compact,  $CAT(K)$ and it is an Alexandrov space of curvature $\geq -K$, for some $K>0$.

Then, for any point $x\in U$ the distance function $f=d_x$ is convex in $U$. Moreover, 
$f$ is also semiconcave on $U\setminus \{x\}$.
More precisely, on the set $O$ of points $y\in U$ with $d(x,y ) >\delta$, the function
$f$ is $C$-concave, for some $C=C(K, \delta)$.  

Since an infimum of $C$-concave functions is $C$-concave, for any subset $B\subset U$ the distance function $d_B$ is  $C(K,\delta )$ concave on the set of all points    $y\in U$ with $d_B(y)>\delta$.

The function $f=d_B$ is $1$-Lipschitz, thus $|\nabla _y f| \leq 1$ for all $y\in U\setminus B$.
By the first variation formula, $|\nabla _y f|=1$ if and only if $y$ is connected with $B$ by a unique shortest geodesic.

The conclusion of Proposition \ref{prop: cut}  will easily follow from the next
Lemma, cf. \cite[Prop. 14.1.5]{AKP}, \cite[Theorem 4.5]{Alb1}:

 \begin{lem} \label{lem: concl}
	Let $B\subset U$ be any closed subset in a special neighborhood $U$ as above. Let $y\in U\setminus B$ be arbitrary and let $ \eta _y :[0, a ) \to U$ be the  gradient curve  in $U$ of the distance function $f=d_B$ starting at $y$.  
	If $|\nabla _y f|<1$ then $|\nabla _z f|<1$, for all $z$ on the gradient curve $\eta _y$.
\end{lem}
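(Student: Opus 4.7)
My plan is to argue by contradiction, using the CAT$(K)$ structure of $U$ to propagate the regularity of $|\nabla f|$ backward along the gradient curve.

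First I handle the trivial case: if $|\nabla_y f| = 0$, then $\eta_y$ is stationary at $y$ and the conclusion is vacuous, so assume $|\nabla_y f| > 0$. Let $\tilde\eta \co [0, \tilde a) \to U$ be the arclength reparametrization of $\eta_y$, and write $g(s) := f \circ \tilde\eta(s)$. Then $g'(s) = |\nabla f|(\tilde\eta(s))$ at every point of differentiability, so $g' \in [0, 1]$ by the 1-Lipschitz property of $f$. Since $f$ is non-decreasing along the gradient curve, $\tilde\eta$ stays in $\{d_B \geq f(y)\} \subset U$, on which $f$ is $C$-concave for some $C = C(K, f(y))$ by the discussion in \secref{subsec: distspec}. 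By \lref{lem: restr}, $g$ is $C$-concave on $[0, \tilde a)$.

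Now suppose for contradiction there is some $s^* > 0$ with $g'(s^*) = |\nabla_z f| = 1$, where $z := \tilde\eta(s^*)$. In the CAT$(K)$ space $U$, the first variation formula makes $|\nabla_z f| = 1$ equivalent to $z$ having a unique foot point $p \in B$; let $\gamma \co [0, f(z)] \to U$ be the unique unit speed geodesic from $p$ to $z$. A short triangle argument shows that each $\gamma(\sigma)$ with $\sigma \in (0, f(z)]$ also has $p$ as its unique foot: any competing foot would realize $d_B(z) = d(z, p)$ at $z$, contradicting uniqueness. Hence $|\nabla f|(\gamma(\sigma)) = 1$ for $\sigma \in (0, f(z)]$, $\gamma$ is itself an arclength parametrized gradient curve of $f$ ending at $z$, and $\nabla_z f = -\xi_p = \gamma'(f(z))$.

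The heart of the argument is to show that $\tilde\eta$ coincides with $\gamma$ on a subinterval $[s^* - \tau, s^*]$ for some $\tau > 0$. Both curves are unit speed, end at $z$, and have the same forward tangent $-\xi_p$ there; moreover the regular set $R := \{|\nabla f| = 1\}$ is open in $U \setminus B$, a consequence of the CAT$(K)$ uniqueness of geodesics in $U$ together with continuity of the foot-point projection at points of uniqueness. Hence the backward tangent of $\tilde\eta$ at $s^*$ agrees with its forward tangent, and the CAT$(K)$ uniqueness of geodesics with prescribed initial data then forces $\tilde\eta$ to coincide with the reverse of $\gamma$ on a subinterval ending at $s^*$. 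Iterating, this coincidence persists backward until either $s$ reaches $0$, giving $|\nabla_y f| = 1$ and contradicting the hypothesis, or the backward extension of $\gamma$ reaches $p \in B$ at some positive $s_0$, contradicting the fact that $\tilde\eta$ lies in $U \setminus B$. The principal obstacle is the rigorous justification of the openness of $R$ and the corresponding tangent continuity of the gradient curve at $s^*$; both ultimately reduce, in the CAT$(K)$ setting, to the local injectivity of the ``normal exponential map'' at a point with a unique foot and a properly extendable minimizing geodesic.
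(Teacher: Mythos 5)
Your reduction to the one--dimensional function $g=f\circ\tilde\eta$ via Lemma \ref{lem: restr} is the right starting point, but the contradiction step has two genuine gaps. First, the set $R=\{|\nabla f|=1\}$ is \emph{not} open in $U\setminus B$: at a first focal point of $B$ the foot point is still unique, so $|\nabla f|=1$ there, yet arbitrarily close by there are points with two feet and $|\nabla f|<1$ (take $B$ the parabola $y=x^2$ in the flat plane and $z=(0,\tfrac12)$). This failure of openness is exactly why Proposition \ref{prop: distance1} phrases condition (4) on an open set rather than pointwise. Second, even granting that the minimizing geodesic $\gamma$ from the unique foot $p$ to $z$ is an arclength gradient curve arriving at $z$, gradient flows of semiconcave functions are only \emph{forward} unique; distinct gradient curves may merge, so nothing forces $\tilde\eta$ to agree with $\gamma$ on any interval $[s^*-\tau,s^*]$. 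The "backward tangent equals forward tangent" step has no justification: $\tilde\eta$ is not known to be a geodesic before $s^*$ (indeed, if $s^*$ is the first time with $g'=1$, the points just before it have $|\nabla f|<1$), so geodesic uniqueness in $U$ cannot be applied. Note also that the plain $C$-concavity of $g$ you extract (with $C=C(K,f(y))>0$) is perfectly consistent with $g'(0)<1=g'(s^*)$, so it cannot by itself produce a contradiction; some sharper convexity input is unavoidable.

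The paper's proof supplies exactly that input and stays entirely one--dimensional. After rescaling the lower bound to $-1$, the function $\hat f=\cosh\circ f$ is $\hat f$-concave (as an infimum over $p\in B$ of the $\hat F$-concave functions $\cosh\circ d_p$), so by Lemma \ref{lem: restr} the function $h=f\circ\tilde\eta$ satisfies $(\cosh\circ h)''\le\cosh\circ h$, is $1$-Lipschitz and increasing, with $h'(0)<1$. If $h'(b)=1$ at the endpoint, comparison with the linear function $h_0(t)=t+(h(b)-b)$, which satisfies $(\cosh\circ h_0)''=\cosh\circ h_0$ with matching value and derivative at $b$, gives $h\le h_0$ on $[0,b]$; the $1$-Lipschitz bound gives the reverse inequality, so $h\equiv h_0$ and $h'(0)=1$, a contradiction. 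You should replace your geometric propagation argument by this comparison (or an equivalent rigidity statement for the differential inequality $(\cosh\circ h)''\le\cosh\circ h$).
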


\begin{proof}
Rescaling the space, we may assume that the lower curvature bound on $U$ equals $-1$.
Then, for the distance function $F=d_p$ to any  $p\in U$ the composition $\hat F = \cosh \circ F$ is $\hat F$-concave on $U$, \cite[Theorem 7.4.1]{AKP}. Hence, also for  the infimum $f=d_B$ of distance functions to points, the composition
$\hat f:=\cosh \circ f$ is $\hat f$-concave on $U$.

  The gradient curves of $f$ and $\hat f$ on $U\setminus B$ coincide up to parametrizations,
  \cite[Theorem 11.4.4]{AKP}.  Thus, the arclength reparametrization $\tilde \eta _y$ of $\eta_y$    is also the arclength parametrization of the gradient curve of
  $\hat f$. From  Lemma \ref{lem: restr}, we deduce that 
  $\hat f\circ \tilde \eta _y $ is $\hat f\circ \tilde \eta _y$-concave  on the  interval of definition $[0, b]$ of $\tilde \eta _y$ from $y$ to $z$.
  
  Define  $h:[0,b]\to \R$ as $h(t)=  f\circ \tilde \eta _y (t)$.  Then $h$ is an increasing,  $1$-Lipschitz, semiconcave function; we have $h'(0)<1$ and the composition $\hat h:=  \cosh \circ h$ is $\hat h$-concave. We only need  to verify $h'(b) <1$.

  Assume, by contrary that $h'(b)=1$.  Consider the linear function $h_0 (t)= t +(h(b)-b)$.
  Then 
  $$h_0(b)=h(b) \; ; \;  h_0'(b)= h'(b) \; ; \; (\cosh \circ h_0 ) '' =\cosh \circ h_0 \;.$$
 By comparison, cf. \cite[Theorem 4.5.3]{AKP}, we deduce 
 $$\cosh \circ h(t) \leq \cosh \circ h_0 (t) \,, $$ 
 for all $t\in [0,b]$. Hence $h(t)\leq h_0(t)$ for all $t$.  However, $h$ is $1$-Lipschitz.
 Thus, $h(t)=h_0(t)$ for all $t\in [0,b]$. This implis $h'(0)=1$ in contradiction to our assumption. This finishes the proof.
\end{proof}

\subsection{Distance functions to remote subsets} \label{subsec: remote}
Let  $M$ be a manifold with two sided curvature bounds  (possibly non-complete and without uniform bounds on curvature), as in Subsection \ref{subsec: intro}.  Let $A$ be a closed subset of $M$ and consider the distance function $f=d_A$ to $A$.

Consider an arbitrary point $x\in M\setminus A$ and a special neighborhood $U=U_x =\bar B_{r_x} (x)$ as in Subsection \ref{subsec: spec}.  Making $r_x$ smaller, we may assume that $U$ is disjoint from $A$.

    Let $\tilde U$ be a smaller ball 
of radius $r< \frac 1 3 \cdot r_x$ around $x$.   Set $s= f(x)$ and denote by $B$ the compact subset $B=f^{-1} (s-2r) \cap U$.   By compactness and convexity of $U$, for any $y\in \tilde U$, we find 
at least one footpoint $\hat y$ of $y$ on $B$, thus $d(y,\hat y)= d_B(y)$.   By the triangle inequality, 
$$f(y) \leq f(\hat y) +d(y, \hat y)  = (s-2r)+ d_B(y) \;.$$
On the other hand, any curve $\eta$ from $A$ to $y$ must contain points $z$ with $f(z)=s-2r$. 
If $z$ is not in $U$ then the length of $\eta$ is at least $s+r$.  Thus, any such $\eta$ 
with length less than $s+r$, contains points on $B$. Thus by the triangle inequality we deduce the equality 
$$d_A=f= (s-2r) +d_B$$
on $\tilde U$.   Since $d_B$  is semiconcave on $\tilde U$, so is $f$, moreover, the gradient curves of $f$ and $d_B$ coincide in $\tilde U$.

Now we arrive at the following results (also valid in all Alexandrov spaces and their localized version, Alexandrov regions, \cite{Leb-Nep}). 
\begin{cor} \label{cor: petrun}
	Let $A$ be a closed subset in a manifold $M$ with two sided curvature bounds. 
	Then, the distance function $f=d_A$ is semiconcave on $M\setminus A$. For any point 
	$x\in M\setminus A$ with $|\nabla_x  f|<1$ we have $|\nabla _yf| <1$  for all $y$ on the gradient curve $\eta _x$.
\end{cor}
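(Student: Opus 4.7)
The plan is to reduce the statement to its local version inside a special neighborhood and then apply Lemma \ref{lem: concl}. The setup in Subsection \ref{subsec: remote} does most of the work: for any $x \in M \setminus A$, I choose a special neighborhood $U = U_x$ of radius $r_x$ disjoint from $A$, a smaller ball $\tilde U$ of radius $r < r_x/3$, and set $s = f(x)$, $B = f^{-1}(s-2r) \cap U$. The key local identity $f = (s-2r) + d_B$ on $\tilde U$ is already established there.

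For the semiconcavity claim, I would note that for $y \in \tilde U$ we have $d_B(y) = f(y) - (s - 2r) \geq r > 0$, so by the general observation recalled in Subsection \ref{subsec: distspec}, $d_B$ is $C$-concave on $\tilde U$ for some $C = C(K_x, r)$. Since $f$ and $d_B$ differ by a constant on $\tilde U$, $f$ inherits this semiconcavity, and since $x \in M \setminus A$ was arbitrary, $f$ is semiconcave on $M \setminus A$.

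For the gradient claim I would first establish a local version: on $\tilde U$ the functions $f$ and $d_B$ share the same gradient and hence the same gradient curves. Lemma \ref{lem: concl} applied to $B \subset U$ yields that whenever $|\nabla_x d_B| < 1$, one has $|\nabla_z d_B| < 1$ for every $z$ on the gradient curve of $d_B$ in $U$. This proves the corollary along the portion of $\eta_x$ lying inside $\tilde U$. To promote this to all of $\eta_x : [0,a) \to M \setminus A$, I would run a connectedness argument on the set $T = \{t \in [0,a) : |\nabla_{\eta_x(t)} f| < 1\}$. Openness of $T$ is immediate from the local statement applied at any $\eta_x(t_0) \in T$. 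For closedness, given $t_n \nearrow t$ with $t_n \in T$, the local construction at $\eta_x(t)$ produces a ball $\tilde V$ containing $\eta_x(t_n)$ for all large $n$; Lemma \ref{lem: concl} applied to the segment of $\eta_x$ from time $t_n$ to $t$ inside $\tilde V$ then forces $|\nabla_{\eta_x(t)} f| < 1$. Connectedness of $[0,a)$ together with $0 \in T$ yields $T = [0,a)$.

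The only genuinely new content beyond Subsection \ref{subsec: remote} and Lemma \ref{lem: concl} is this propagation step; since Lemma \ref{lem: concl} is intrinsically confined to a single special neighborhood $U$, the global conclusion along $\eta_x$ can only be obtained by covering the curve with such neighborhoods and bootstrapping, and this is where I expect the main (though modest) obstacle to lie.
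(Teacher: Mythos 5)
Your proposal is correct and follows essentially the same route as the paper: reduce to the local identification $f=(s-2r)+d_B$ from Subsection \ref{subsec: remote}, apply Lemma \ref{lem: concl}, and propagate along the gradient curve by covering it with special neighborhoods (the paper phrases this propagation as a minimal-counterexample argument, you as an open/closed argument, which is the same in substance). The only nitpick is that Lemma \ref{lem: concl} gives only \emph{forward} openness of $T$ (propagation is one-directional along the flow), but combined with your left-closedness step this still suffices for the bootstrap.
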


\begin{proof}
The semiconcavity condition is local and has been verified above in the neighborhood of any $x\in M\setminus A$.  

In order to see the second statement, we consider the compact part $\eta _x :[0,b] \to M$ of the gradient curve $\eta _x$ between $x$ and $y$. Assuming the contrary, and using that $f\circ \tilde \eta _x$ is semiconcave, we find a smallest 
$t \in (0,b]$ such that $|\nabla _{\eta_x (t)} f|=1$.

Now we find  a small special neighborhood $U$  of $z=\eta _x (t)$, identify on a smaller neighborhood $\tilde U$ (up to an additive constant) $f$ with $d_B$ for a closed subset $B\subset U$ and derive a contradiction  to 
  Lemma \ref{lem: concl}.
  %  The conclusion now follows by induction on the number of  these neighborhoods covering $\eta_x$.
	\end{proof}

\section{Coordinates} \label{sec: dist}

 % Let $p \in M_1$ be a point and let $U$ be a small convex ball in 
%$M_1$. Then the distance function $f=d_p$ is convex in $U$. Moreover, if $d(U,p)\ge 2r$ then
%the restriction to each geodesic $\gamma$ in $U$ satisfies $0\leq (f\circ \gamma )'' \leq a \cdot \mu$, where $\mu$ is a fixed constant depending only on $\kappa$, which we can assume 
%to be $1$.  Thus the restriction of $f$ to each geodesic in $U$ is $\mathcal C^{1,1}$ with uniformly bounded $\mathcal C^{1,1}$-norm.

\subsection{Distance coordinates}
Let $U\subset M$ be a special neighborhood of a point $x$ in a manifold $M$ with two sided curvature bounds.

  For any $x\in U$ the function $d_x$ is convex in $U$. Moreover, 
  $d_x$ is $C(K,\delta)$ concave on the set of points $y$ in $U$ with
  $d(x,y)>\delta$.

  For any  $p$ in the interior of $U$, consider any  points $p_1,...,p_n$ in $U$ such that the starting  directions
  of the geodesics $pp_i$ are almost orthogonal at $p$, (see \cite[Theorem 13.2]{Ber-Nik}).   Then the map $F:U\to \R^n$ with coordinates 
  $f_i:=d_{p_i}$  is a biLipschitz map $F:O\to \tilde O$ from an open ball $O$ around $p$ onto an open subset of $\R^n$.   (This is even true for any Alexandrov space, \cite{BGP}).
  
  Any such restriction $F:O\to \tilde O$ is called a \emph{distance chart} on $M$. The distance charts define a $\mathcal C^{1,1}$ atlas on $M$ and the distance on $M$ is given by a Riemannian metric of class $\mathcal C^{0,1}$ with respect to this atlas,  \cite[Theorem 13.2]{Ber-Nik}.

  \subsection{Distance charts and  semiconcavity}
  The following observation can also be used to obtain a shorter alternative proof of 
  \cite[Theorem 13.2]{Ber-Nik}, using the observation that a homeomorphism  between open subsets of $\R^n$ is $\mathcal C^{1,1}$ if and only if it preserves the class of semiconcave functions.
  
  \begin{lem}
  	Let $F:O\to \tilde O\subset \R^n$ be a distance chart in a manifold with two sided curvature bounds.   Then a function $f:\tilde O\to \R$ is semiconcave if and only if $f\circ F$ is semiconcave on $O$. 
  \end{lem}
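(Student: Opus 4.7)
The plan is to deduce both implications from the midpoint characterization of semiconcavity \eqref{eq: semi}, applied on the $\R^n$ side (via Euclidean midpoints) and on the $M$ side (via $M$-midpoints) in parallel. The core observation is that $F$ maps $M$-midpoints to approximate Euclidean midpoints, with error only quadratic in the distance.

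Concretely, first I would prove the following \emph{midpoint distortion estimate}: for any $P, Q$ in a small enough ball of $O$ and any $M$-midpoint $R$ of $P,Q$,
\[
\Bigl| F(R) - \tfrac{1}{2}(F(P) + F(Q)) \Bigr| \leq C_1 \, d(P,Q)^2,
\]
for some constant $C_1$. This is obtained coordinate-wise from the fact that each coordinate $f_i = d_{p_i}$ is simultaneously convex on $U$ (since $U$ is $CAT(K)$) and $C_0$-concave on $O$ (Subsection \ref{subsec: distspec}). Convexity gives $f_i(R) \leq \tfrac{1}{2}(f_i(P)+f_i(Q))$, while $C_0$-concavity together with \eqref{eq: semi} gives $f_i(R) \geq \tfrac{1}{2}(f_i(P)+f_i(Q)) - \tfrac{C_0}{8}d(P,Q)^2$. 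Together these squeeze each component of $F(R)$ within $\tfrac{C_0}{8} d(P,Q)^2$ of the corresponding Euclidean midpoint coordinate.

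With this estimate and the biLipschitz property of $F$ providing $d(P,Q) \sim |F(P) - F(Q)|$, both directions of the lemma become essentially the same calculation. For the forward direction, assume $f$ is semiconcave on $\tilde O$; being semiconcave, $f$ is locally Lipschitz, and then the Euclidean midpoint inequality for $f$ at $y_m = \tfrac{1}{2}(F(P)+F(Q))$, combined with the distortion estimate to replace $f(y_m)$ by $(f \circ F)(R)$, yields \eqref{eq: semi} for $f \circ F$ with an adjusted constant. The converse runs the same steps in reverse: semiconcavity of $f \circ F$ on $O$ forces $f = (f \circ F) \circ F^{-1}$ to be locally Lipschitz on $\tilde O$, and the same bookkeeping with the distortion estimate produces the Euclidean midpoint inequality for $f$.

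The only nontrivial step is the midpoint distortion estimate; everything else is routine. Note in particular that no higher regularity of $F$ beyond biLipschitz is used, consistently with the remark preceding the lemma that this observation can be leveraged to obtain a shorter proof of the $\mathcal{C}^{1,1}$-atlas structure.
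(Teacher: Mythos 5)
Your proposal is correct and follows essentially the same route as the paper: both hinge on the midpoint distortion estimate $d(F(m),\tfrac12(F(q_1)+F(q_2)))\leq C\,d^2(q_1,q_2)$, derived from the simultaneous convexity and semiconcavity of the coordinate functions $f_i=d_{p_i}$, combined with the biLipschitz property of $F$ and the midpoint characterization \eqref{eq: semi}. Your write-up just fills in the bookkeeping (local Lipschitzness to absorb the quadratic error) that the paper leaves implicit.
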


  \begin{proof}
  From the biLipschitz porperty of $F$, and semiconcavity and semiconvexity of the coordinates $f_i$, we deduce the following.
  For any geodesic $\gamma$ in $O$ connecting $q_1$ and $q_2$ and having $m$ as its midpoint
  the distance between $F(m)$ and the midpoint $\bar m =\frac 1 2 (F(q_1) +F(q_2))$  in $\R^n$ between $F(q_i)$, we have 
  $$d(F(m),\bar m) \leq C\cdot d^2(q_1,q_2)\;.$$ 
  Here the constant $C$ depends only on the biLipschitz constant of $F$ and the curvature bounds.
  
  Since $F$ is biLipschitz,  $F^{-1}$ sends midpoints in $\tilde O$ to "almost midpoints" in $O$ in the same sense as above.   
  Now the equivalence of semiconcavity of $f$ and $f\circ F$ follows  after applying \eqref{eq: semi}.
  % expressing semiconcavity in terms of the values of a function on midpoints between pairs of points
  %(as in the case of functions on the interval).
  	\end{proof}

 \section{Main results} \label{sec: main}
 \subsection{General distance functions}  We are going to prove the following slight generalization of Proposition \ref{prop: distance}
 
 \begin{prop} \label{prop: distance1}
 	Let  $M$ be a manifold with two sided curvature bounds. Let $A\subset M$ be a closed subset and  $f=d_A$.  Then the following conditions are equivalent for an open subset $O\subset M\setminus A$:
 	\begin{enumerate}
 		\item \label{o-semiconvex} $f$ is semiconvex in $O$.
 		\item \label{o-c1-1} $f$ is $\mathcal C^{1,1}$ in $O$.
 		\item \label{o-c1} $f$ is $\mathcal C^1$ in $O$.
 		\item For all $x\in O$ we have $|\nabla _x f|=1$
 		\item \label{o-unique-geod} For any $x\in O$, there exists at most one geodesic $\gamma_x :[0,\epsilon)\to M$ starting at $x$, with  
 		$f\circ \gamma_x (t) =f(x)-t$, for all $t$ in $[0,\epsilon)$.
 	\end{enumerate}
 \end{prop}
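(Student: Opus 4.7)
The plan is to close the cycle $(1) \Rightarrow (2) \Rightarrow (3) \Rightarrow (5) \Rightarrow (4) \Rightarrow (1)$. All five conditions are local, so by Subsection~\ref{subsec: remote} I may reduce around each $x \in O$ to a ball $\tilde U \subset O$ in which $f = d_A$ agrees, up to an additive constant, with $d_B$ for a compact $B$ sitting inside a convex special neighborhood $U$ which is simultaneously $CAT(K)$ and of curvature $\geq -K$. From now on I argue inside $\tilde U$ with $f = d_B$.

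The easy links first. The implication $(2) \Rightarrow (3)$ is tautological. For $(3) \Rightarrow (5)$: if $\gamma$ is a geodesic from $y$ with $f \circ \gamma(t) = f(y) - t$, then $df_y(\gamma'(0)) = -1$; since $f$ is $1$-Lipschitz, $|df_y| \leq 1$, so equality pins down $|df_y| = 1$ and $\gamma'(0) = -\nabla_y f / |\nabla_y f|$, giving uniqueness. For $(5) \Rightarrow (4)$: compactness of $B$ inside the convex ball $U$ yields at least one minimizing geodesic from any $y \in \tilde U$ to $B$, so under $(5)$ the set of minimizing directions from $y$ is a single unit vector $\xi_y$; by the first variation formula in Alexandrov spaces, $|\nabla_y f| = 1$ precisely when that set is a singleton. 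Finally, for $(1) \Rightarrow (2)$, I pass to a $\mathcal C^{1,1}$ distance chart: the Lemma of Section~\ref{sec: dist} transfers semi-convexity and semi-concavity of $f$ to their Euclidean counterparts, and a function on an open subset of $\R^n$ that is both semi-convex and semi-concave has Lipschitz gradient, hence is $\mathcal C^{1,1}$.

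The core step is $(4) \Rightarrow (1)$, where both curvature bounds enter essentially. Combining $(4)$ with the semi-concavity of $f$ from Corollary~\ref{cor: petrun}, the super-differential $\partial^+ f(y)$ is a convex subset of the closed unit ball in $T_y M$ whose minimum-norm element has norm $1$; every element must then lie on the unit sphere, and strict convexity of the Euclidean ball forces any convex subset of the unit sphere to be a single point. Hence $\partial^+ f$ is everywhere a singleton and $f$ is classically differentiable on $\tilde U$; upper semi-continuity of the super-differential map gives continuity of $\nabla f$, so $f \in \mathcal C^1(\tilde U)$. To upgrade to semi-convexity, since $|\nabla f| \equiv 1$, the backward gradient flow is a unit-speed geodesic flow along the unique minimizing geodesics to $B$; by $CAT(K)$ comparison the divergence of nearby such geodesics is controlled, so the foot-point map $\Pi \colon \tilde U \to B$ is locally Lipschitz. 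Writing $f(y) = d(y, \Pi(y))$ and using that each $d(\cdot, b)$ is $\mathcal C^{1,1}$ in distance coordinates (two-sided curvature bounds make the distance to a point both semi-convex and semi-concave), an envelope argument produces $\mathcal C^{1,1}$ regularity of $f$ in the chart, and in particular semi-convexity. The Lipschitz control of $\Pi$ is the main technical obstacle I foresee.
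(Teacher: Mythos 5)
Your reduction to a special neighborhood, and the links $(1)\Rightarrow(2)\Rightarrow(3)\Rightarrow(5)\Rightarrow(4)$, are sound and essentially the same as the paper's (the paper routes $(3)\Rightarrow(4)\Leftrightarrow(5)$ via the first variation formula, while you go $(3)\Rightarrow(5)\Rightarrow(4)$; both work, modulo the standard fact that in the special neighborhood two geodesics issuing from a point at angle $0$ coincide). The problem is the implication $(4)\Rightarrow(1)$, which is the only place where semiconvexity has to be produced, and there your argument has a genuine gap exactly where you flag it: the local Lipschitz continuity of the foot--point map $\Pi$. This is not a consequence of $CAT(K)$ comparison in the way you suggest. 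The $CAT(K)$ condition controls the divergence of two geodesics \emph{with prescribed endpoints} --- it bounds $d(\gamma_1(t),\gamma_2(t))$ in terms of $d(y_1,y_2)$ \emph{and} $d(\hat y_1,\hat y_2)$ --- so trying to extract a bound on $d(\hat y_1,\hat y_2)$ from it is circular. In fact, Lipschitz continuity of $\Pi$ on the regular set is essentially equivalent to the $\mathcal C^{1,1}$ regularity of $f$ you are trying to prove (since $\hat y=\exp_y\bigl(f(y)\cdot(-\nabla_yf)\bigr)$), so the envelope argument as you set it up presupposes its own conclusion. Your first paragraph of that step (superdifferential is a singleton on the unit sphere, hence $f\in\mathcal C^1$) only delivers $(4)\Rightarrow(3)$, which does not close your cycle through $(1)$.

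The paper closes this gap without ever mentioning the projection. Since $|\nabla f|\equiv 1$ on $O$, each gradient curve is a unit--speed geodesic along which $f$ increases at speed $1$. Given $q_1,q_2$ in a small ball with midpoint $m$, flow $m$ forward a definite time $2\delta$ to $z=\eta_m(2\delta)$; then $f(z)=f(m)+d(m,z)$ while the triangle inequality gives $f(z)\leq f(q_i)+d(q_i,z)$. Subtracting, the semiconvexity defect of $f$ at $m$ is bounded by the semiconcavity defect of the \emph{single} distance function $d(\cdot,z)$ at $m$, which is $\leq \frac{C(K,\delta)}{8}\,d^2(q_1,q_2)$ because $q_1,q_2,m$ stay at distance $\geq\delta$ from $z$. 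This yields $(-C)$-convexity directly from the midpoint characterization \eqref{eq: semi}, using only the forward gradient flow and the two--sided concavity/convexity of distance functions to points --- no regularity of $\Pi$ is needed. I recommend you replace your $(4)\Rightarrow(1)$ step by this argument; the rest of your proof can stand.
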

 
    \begin{proof}
    	All statements are local on $O$. We may fix $p\in O$ and  consider  a special neighborhood $U$
    	of $p$ in $M$. Furthermore, in Subsection \ref{subsec: remote}, we have found a closed subset $B$ in $U$ and a smaller neighborhood
    	$ O_0$ of $p$ in $O$, such that on $O_0$ the function $f$ coincides with 
    	$d_B$ up to an additive constant. Thus, we may assume without loss of generality, that 
    	$A=B\subset U$ and $O= O_0$.  
    		Making $O$ smaller, if needed, we may assume that $O$ is a coordinate chart. Thus, on $O$
    	the notion of semiconcavity  are the same with respect to the metric structure and to the coordinate chart.
    	 
    	 The semiconcavity of $f$ has been verified in Corollary \ref{cor: petrun}.  Since
    	 on open subsets of $\R^n$ a function is $\mathcal C^{1,1}$ if and only if it is semiconcave and semiconvex, the properties (1) and (2) are equivalent.
    	 
    	 Clearly, (2) implies (3).
    	 
    	 For any $x\in O$, we find, by compactness, at least one shortest geodesic $\gamma _x :[0, f(x)]\to U$
    	 from $x$ to $A=B$.   Then $(f\circ \gamma _x)'=-1$ on $[0, f(x))$.  This shows, that for all $t\in (0, f(x))$, we have $|\nabla _{\gamma _x (t)}f|=1$.    If $f$ is $\mathcal C^1$
    	 it also implies $|\nabla _x f|=1$.   Hence, (3) implies (4).

    	 By the first formula of variation, $|\nabla_x f|=1$ if and only $x$ is connected with 
    	 $B$ by exactly one shortest geodesic $\gamma _x$. Note that for any such geodesic $f\circ \gamma _x (t) =f(x)- t$ for all $t\in [0,f(x)]$.    Moreover, for any 
    	 geodesic, $\tilde \gamma _x:[0,\epsilon) \to U$ satisfying the above equality for all 
    	 $t\in [0,\epsilon)$, the unique extension of $\tilde \gamma _x$ to length $f(x)$ ends on $B$.     This shows the equivalence of (4) and (5).

    	  It remains to show that (4)  implies (1).  In this case, gradient lines $\eta _x$ of $f$ in $O$
    	  are parametrized by arclength and $f\circ \eta _x$ has everywhere velocity $1$. This implies that any gradient line $\eta _x$ in $O$ is an (as always minimizing)  geodesic.
    	  
    	  Fix $x\in O$ and consider $\delta$ such that the ball of radius $3\delta$ around $x$ is contained in $O$.  Consider $C=C(K,\delta)$, the semiconcavity constant of distance functions in $O$ at distance $\geq \delta$, as in Subsection \ref{subsec: distspec}.
    	  We claim that $f$ is $-C$-convex on the ball $W$ of radius $\delta$ around $x$.
    	  
    	  Indeed, consider points $q_1,q_2\in W$ and their midpoint $m$.  Set $z=\eta_m (2\delta)$.  Then, for $i=1,2$,
    	  $$f(z)=f(m)+2\delta =f(m)+d(m,z)\leq f(q_i) +d(q_i,z) \;,$$
    	   due to the triangle inequality.  This implies
    	  $$f(m)\leq \frac 1 2 (f(q_1)+f(q_2))+ (\frac 1 2 (d(q_1,z) +d(q_2,z))- d(m,z)) \leq $$
    	  $$\leq    	  \frac 1 2 (f(q_1)+f(q_2)) +\frac  C 8  \cdot d^2 (q_1,q_2) \;.$$
    	  This implies the claimed semiconvexity of $f$.
    	\end{proof}

 \subsection{Cut locus}  As a combination of previous results we now  obtain 
 \begin{proof}[Proof of Proposition \ref{prop: cut}]
As before set  $f=d_A$.
 Due to Proposition \ref{prop: distance1}, the cut locus  $CL (A)$ of $A$   is exactly the closure
 $CL(A)= \bar X$  of 

 $$X=\{x\in M\setminus A \; : \; |\nabla _x f|<1\}\;.$$
 As we have seen in Corollary \ref{cor: petrun}, the set $X$ is invariant under the gradient flow of $f$. By continuity of the gradient flow, the closure $\bar C=CL(A)$ is invariant under the gradient flow as well.
 	\end{proof}

 As a  consequence we deduce the following  observation from \cite{Ghomi}:
 \begin{cor} \label{cor: ghomi}
 	Let $M$ be a complete Riemannian manifold. Let $A$ be a closed subset of $M$ and let $X=CL(A) \subset M\setminus A$ be the cut locus  of $A$.  Assume that the distance function $d_A$ is concave on $M$. 
 	
 	 Let $x\in M\setminus A$ be arbitrary and let 
 	$x_0\in CL(A)$ be  a point with $d(x,x_0)=d(x,CL(A))$.  Then $d_A(x_0) \geq d_A(x)$.
 \end{cor}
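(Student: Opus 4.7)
The plan is to argue by contradiction using the gradient flow of $f := d_A$ together with the invariance of the cut locus from Proposition \ref{prop: cut}. Suppose, toward a contradiction, that $c := f(x) - f(x_0) > 0$, and write $d := d(x, x_0) = d(x, CL(A))$.

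First I would analyze $f$ along a minimizing geodesic from $x_0$ to $x$. Choose a unit-speed minimizer $\gamma \colon [0, d] \to M$ from $x_0$ to $x$. By the hypothesis that $f$ is concave on $M$, the function $f \circ \gamma$ is concave on $[0, d]$ with total increase $c$, so its right derivative at $0$ satisfies $(f \circ \gamma)'(0^+) \geq c/d > 0$. Combined with the standard semiconcave inequality $D_v f(x_0) \leq \langle v, \nabla_{x_0} f \rangle$ for $v = \gamma'(0)$, this yields
$$\langle \gamma'(0), \nabla_{x_0} f \rangle \geq c/d > 0.$$

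Next I would flow $x_0$ by the gradient flow and argue that this motion brings it closer to $x$ while keeping it in $CL(A)$. Let $\eta_{x_0}(t) = \Phi_t(x_0)$ be the gradient curve of $f$; by Proposition \ref{prop: cut} it remains in $CL(A)$ on its interval of existence, and its initial velocity is $\nabla_{x_0} f$. The first variation formula applied to the distance function $g(y) := d(x, y)$ along $\eta_{x_0}$ gives
$$\frac{d^+}{dt}\bigg|_{t=0} d(x, \eta_{x_0}(t)) \leq -\langle \gamma'(0), \nabla_{x_0} f \rangle \leq -c/d < 0,$$
so $d(x, \eta_{x_0}(t)) < d(x, x_0)$ for all sufficiently small $t > 0$. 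Since $\eta_{x_0}(t) \in CL(A)$, this contradicts $d(x, x_0) = d(x, CL(A))$ and the corollary follows.

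The only delicate step is the first-variation bound, since $x_0$ may lie on the cut locus of $x$ in $M$ and $g$ need not be differentiable there. This is handled by the general inequality valid on any complete Riemannian manifold,
$$\frac{d^+}{dt}\bigg|_{t=0} g(\alpha(t)) \leq -\max_{\tilde v} \langle \tilde v, \alpha'(0) \rangle,$$
where $\tilde v$ ranges over unit initial directions of minimizing geodesics from $\alpha(0)$ to $x$; specializing to $\alpha = \eta_{x_0}$ and taking $\tilde v = \gamma'(0)$ produces the displayed bound.
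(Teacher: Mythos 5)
Your argument is correct and is essentially the paper's own proof: concavity of $d_A$ along a minimizing geodesic from $x_0$ to $x$ forces $\langle \gamma'(0),\nabla_{x_0}d_A\rangle>0$, the first variation inequality then shows the gradient curve $\eta_{x_0}$ initially decreases the distance to $x$, and invariance of $CL(A)$ under the gradient flow (Proposition \ref{prop: cut}) yields the contradiction. Your extra care with the superdifferential inequality and with the possible non-differentiability of $d(x,\cdot)$ at $x_0$ is a welcome refinement of the same argument, not a different route.
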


 \begin{proof}
 Assume the contrary, thus $d_A(x_0)<d_A(x)$. Consider a geodesic $\gamma$ from $x_0$ to $x$.
 Since $h:=d_A\circ \gamma$  is concave, the derivative of $h$ at $0$ is positive. In particular, $x_0$ is non-critical for the function $d_A$.  Thus, the gradient curve $\eta _{x_0}$
 of  $d_A$ starting at $x_0$
  is non-constant. Moreover, the angle between $\gamma'(0)$ and $\nabla _{x_0}  d_A $ is less than $\frac \pi 2$.
 
 By the first variation formula, the derivative of $l(t):=d(x,\eta _{x_0} (t))$ at $0$ is negative. Hence, for small $t>0$, we have 
 $$d(x, \eta _{x_0} (t)) < d(x, x_0)\;.$$
 Due to Proposition \ref{prop: cut}, $\eta _{x_0} (t)$ is  contained  $CL(A)$. Thus, $x_0$ is not a nearest point to $x$ in $CL(A)$. This contradiction  finishes the proof.
 	\end{proof}
 
 \subsection{Simple counterexample} The following example shows that without a completeness assumpion no higher smoothness of the distance function can be expected:

 \begin{ex} \label{ex: nonsmooth}
 	  Let $M$ be the flat non-complete manifold $\R^2\setminus C$, where $C$ is   the ray $C = \{(0,t)\; : \; t\geq 0 \}$.  Let $A$ be the singleton $(1,0) \in M$. It is not difficult to see (for instance, using that the completion of $M$ is CAT(0), \cite[Proposition 12.1]{LW-iso},  thus uniquely geodesic) that $Reg^{d_A}$ is the whole complement $M\setminus A$.
 	While $M$ is smooth and $A$ is a smooth submanifold, the distance function $f$ to $A$ is not smooth:
 	level sets of $f^{-1} (s)$ of $f$ are concatenations of parts of  Euclidean circles of radii 
 	$s$ and $s-1$ and thus not $\mathcal C^2$.  
 \end{ex}

  \section{Subsets of positive reach} \label{sec: reach}
 \subsection{Characterization} We are going to provide:
 \begin{proof}[Proof of Proposition \ref{prop: reach}]
   Proposition \ref{prop: distance} shows the equivalence of (1) and (2). Moreover, by 
Proposition \ref{prop: distance}, (3) implies (2).

It remains to show that (2) implies (3).  Thus, let $O$ be a neighborhood of $A$ such that
$f=d_A$ is $\mathcal C^{1,1}$ in $O\setminus A$.  Then $f$ is semiconvex on $O\setminus A$ and we need to show that $f$ is semiconvex around any point $x\in A$.  Consider a special ball
$U_x$ around $x$ of radius $r$.    Let $W$ be the ball of radius $\frac r 3$ around $x$.
For any $q_1,q_2 \in W$  with midpoint $m$ we either have $m\in A$ and then 
$$f(m)=0\leq \frac 1 2 (f(q_2)+f(q_2)) \;.$$
Or $m\notin A$. Then the gradient curve $\eta _m$ of $f$ starting in $m$ is a geodesic on $[0,\frac r 3]$. As  in the last part of the proof of Proposition \ref{prop: distance}, we deduce 
$$f(m) \leq   \frac 1 2 (f(q_1)+f(q_2)) +\frac  C 8  \cdot d^2 (q_1,q_2) \;,$$
for some $C$ depending only on $U_x$.  Hence $f$ is semiconvex in $W$, finishing the proof.
\end{proof}

\subsection{Geodesics in normal directions}
Now we derive:
\begin{proof}[Proof of Proposition \ref{lem-norm-geod}] 
	The distance function $f=d_A$ is semiconvex  in a neighborhood $O$ of $A$.  Thus, we have a well-defined (directional) differential $D_xf:T_xM\to \R$ and this differential coincides with the  distance to  the tangent cone $T_x A$, cf. \cite{Lyt-open}.
	
	By the definition of normal directions, we have $D_xf (h)=1$. In other word, $(f\circ \gamma ^h )')(0)=1$.   By semiconvexity of $f\circ \gamma ^h$, we have 
	$$\lim _{t\to 0} (f\circ \gamma ^h)'(t)=1\,.$$ 
	 By the first variation formula, this implies that for $t\to 0$, the angle 
	between $(\gamma^h)'(t)$ and the unique shortest geodesic from $\gamma ^h(t)$ to $A$  converges to $\pi$.  Thus the angle between the gradient curve $\eta^t$ of $f$ starting at $\gamma ^h (t)$ and $\gamma ^h$ at the point $\gamma ^h (t)$ converges to $0$.  Since, the gradient curves $\eta ^t $ are geodesics on a fixed interval $[0, s_0]$, we deduce that $\eta ^t :[0,s_0]\to O$ converge to $\gamma ^h$.
	
	Thus, $d(\gamma ^h (t), A)=t$ for all $t\in [0,s_0]$.  This implies the claim.
\end{proof}

\section{Harmonic coordinates and submanifolds} \label{sec: last}

\subsection{Harmonic  and distance coordinates} We provide:
\begin{proof}[Proof of Proposition \ref{prop: harm}].
	Thus, let $M$ be a manifold with two sided  curvature bounds.  Let $G:V\to \R^n$ be some distance coordinates on an open subset $V$ of $M$ and let $f:V\to \R$ be a harmonic function.
	
	We identify $V$ with $G(V)\subset \R^n$.  The   Riemannian metric $g$ of $M$ restricted to  is Lipschitz continuous on $V$, \cite[Theorem 13.2]{Ber-Nik}.

	Let $x\in V$ be arbitrary and consider  arbitrary harmonic coordinates $F:O\to \R^n$ defined on a neighborhood $O$ of $x$. Thus, the coordinates of $F$ are harmonic functions in $O$.
	
	Due to Nikolaev's theorem,  the pull-back Riemannian metric $ \tilde g:= (F^{-1}) {\ast}  (g)$ is $\mathcal C^{1,\alpha}$
	on $\tilde O:=F(O) \subset \R^n$, for all  $\alpha <1$, \cite[Theorem 14.2]{Ber-Nik}. In particular, $\tilde g$ is locally Lipschitz continuous.
	
	Then $F:(O,g)\to (\tilde O, \tilde g)$ is  an isometry between Riemannian manifolds with $\mathcal C^{0,1}$ Riemannian metrics.  And such an isometry 
	is always $\mathcal C^{1,1}$, \cite{Sabitov-smooth}.  See also \cite{LY} and \cite{T} for other proofs of this fact.
	Hence,  $F$ and $F^{-1}$ are of class $\mathcal C^{1,1}$.

	The  function $F^{-1} \circ f$ is harmonic on $\tilde O$. By elliptic regularity, \cite{Taylor}
	or \cite[p. 689]{Sabitov}, the function $F^{-1} \circ f$ is $\mathcal C^{3,\alpha}$.  In particular, it is $\mathcal C^{1,1}$. Hence, $f=F\circ F^{-1} \circ f$ is $\mathcal C^{1,1}$ on $O$ as well. 
	This finishes the proof.
\end{proof}

\subsection{Submanifolds} This subsection is devoted to 
\begin{proof}[Proof of Proposition \ref{prop: submanifold}]
Let $M$ be a manifold with two sided bounded curvature and let $N$ be a $\mathcal C^{1,1}$ submanifold with respect to distance coordinates.
	
	The statement is local. We fix  any point $x\in N$ and  consider a special neighborhood 
	$U_x$ of $x$ in $M$. Then we find a compact $\mathcal C^{1,1}$ submanifold $\hat N$ of $M$ which is contained in $ U_x$ and such that $N$ and $\hat N$ coincide in a neighborhood of $x$.

	Since the statement is local we may assume that $N=\hat N$.  We may further assume that $U_x$ is a distance chart and identify it with a ball in $\R^n$. Then the distance in $U$ is given by a Lipschitz continuous Riemannian metric $g$.  
	
	Due to \cite[Proposition 1.5]{Ly-conv}  and \cite[Theorem 1.2]{Ly-reach}, the space $N$ is $CAT(\kappa)$ for some $\kappa$.   It thus remains to show that  $N$ has curvature bounded from below.

	Applying Proposition \ref{prop: harm} and Nikolaev's approximation theorem, \cite[Theorem 15.1]{Ber-Nik}, we may further assume on $U$ there exist smooth Riemannian metrics $g^{\epsilon}$, which are uniformly Lipschitz continuous and converge to the Riemannian metric $g$. Moreover, the Riemannian manifold $(U,g^{\epsilon})$ have sectional curvature uniformly bounded from above and below.
	
	Smoothing the submanifold $N$ we find a family of smooth submanifolds $N^{\epsilon} \subset U$ with 
	uniform bounds on their $\mathcal C^{1,1}$-norms (thus any $N^{\epsilon}$ is a union of a uniform number of charts each of them  of bounded $\mathcal C^{1,1}$-norm), such that
	$N^{\epsilon}$ converge to $N$ in $\mathcal C^1$ sense.
	
	Considering $N^{\epsilon}$ with the the intrinsic metric induced by $g^{\epsilon}$, we see that $N^{\epsilon}$ converge to $N$ in the Gromov--Hausdroff metric (in fact, the convergence is much stronger).    Thus, it remains to show that all $N^{\epsilon}$ are Alexandrov space
	of curvature $\geq -\kappa$, for some fixed $\kappa$.  However, the $\mathcal C^{1,1}$ bounds of $N^{\epsilon}$ directly imply, that the second fundamental forms of $N^{\epsilon}$ are uniformly bounded (as subsets of the flat space $\R^n$ and therefore,  of the Riemannian manifold $(U,g^{\epsilon})$).  Applying the Gau\ss $ $ equation, we derive a uniform lower bound on the sectional curvatures of $N^{\epsilon}$. By Toponogov's theorem and compactness of $N^{\epsilon}$, the manifolds $N^{\epsilon}$ are Alexandrov spaces of curvature $\geq -\kappa$.   This finishes the proof. 
\end{proof}

\bibliographystyle{alpha}
%\bibliography{ARZ}

\end{document}